\newtheorem{theorem}{Theorem}[section]
\newtheorem{corollary}[theorem]{Corollary}
\newtheorem{definition}[theorem]{Definition}
\newtheorem{lemma}[theorem]{Lemma}
\newenvironment{proof}[1][Proof]{\noindent\textbf{#1.} }
{\hfill \ \rule{0.5em}{0.5em}}
\newcommand{\incidence}{\mathcal{I}}
\begin{document}

\title{A Szemer\'{e}di-Trotter type theorem, sum-product estimates in finite quasifields, and related results}
\author{Thang Pham\thanks{EPFL, Lausanne, \texttt{thang.pham@epfl.ch}. The first author was partially supported by Swiss National Science Foundation grants 200020-162884 and 200020-144531.}
\and Michael Tait\thanks{Department of Mathematical Sciences, Carnegie Mellon University, \texttt{mtait@cmu.edu}} 
\and Craig Timmons\thanks{Department of Mathematics and Statistics, California State University Sacramento, \texttt{craig.timmons@csus.edu} }
\and Le Anh Vinh\thanks{University of Education, Vietnam National University Hanoi, \texttt{vinhla@vnu.edu.vn}. The fourth author was supported by Vietnam National Foundation for Science and Technology Development grant 101.99-2013.21.}}
\date{}
\maketitle
\vspace{-5mm}
\begin{abstract}
We prove a Szemer\'{e}di-Trotter type theorem and a sum-product estimate in the setting of finite quasifields. 
These estimates generalize results of the fourth author, of Garaev, and of Vu.  
We generalize results of Gyarmati and S\'{a}rk\"{o}zy on the solvability of the equations $a + b = cd$ and
$ab + 1 = cd$ over a finite field.  Other 
analogous results that are known to hold in finite fields are generalized to 
finite quasifields.

\end{abstract}


\section{Introduction}

Let $R$ be a ring and $A \subset R$.  The \emph{sumset} of $A$ is the set $A + A = \{ a + b : a ,b \in A \}$,
and the \emph{product set} of $A$ is the set $A \cdot A = \{ a \cdot b : a , b \in A \}$.
A well-studied problem in arithmetic combinatorics is to prove non-trivial lower bounds on the quantity 
\[
\max \{ | A + A | , |A \cdot A | \}
\]
under suitable hypothesis on $R$ and $A$.  One of the first results of this type is due to Erd\H{o}s 
and Szemer\'{e}di \cite{es}.  They proved that if $R = \mathbb{Z}$ and $A$ is finite, then there are positive constants $c$ and $\epsilon$, both independent of $A$, such that 
\[
\max \{ |A + A| , |A \cdot A | \} \geq c |A|^{1 + \epsilon}.
\]
This improves the trivial lower bound of $\max \{ |A + A| , |A \cdot A | \} \geq |A|$.  Erd\H{o}s and Szemer\'{e}di conjectured that the correct exponent is $2 - o(1)$ where $o(1) \rightarrow 0$ as $|A| \rightarrow \infty$.  
Despite a significant amount of research on this problem, this conjecture is still open.  For some time the best known exponent was $4/3 - o(1)$ due to Solymosi \cite{soly} (see also \cite{rudnev2} for similar results) who proved that for 
any finite set $A \subset \mathbb{R}$, 
\[
\max \{ |A + A | , |A \cdot A | \} \geq \frac{ |A|^{4/3} }{ 2 (  \log |A| )^{1/3} }.
\]
Very recently, Konyagin and Shkredov \cite{ks} announced an improvement of the exponent to $4/3 + c- o(1)$ for any
$c < \frac{1}{20598}$.  

Another case that has received attention is when $R$ is a finite field.  Let $p$ be a prime and 
let $A \subset \mathbb{Z}_p$.  Bourgain, Katz, and Tao \cite{bkt} proved that if $p^{ \delta } < |A| < p^{1- \delta}$ 
where $0 < \delta < 1/2$, then 
\begin{equation}\label{eq:bkt}
\max \{ |A + A| , |A \cdot A| \} \geq c  |A|^{1 + \epsilon }
\end{equation}
for some positive constants $c$ and $\epsilon$ depending only on $\delta$.  Hart, Iosevich, and Solymosi \cite{his} obtained bounds that give an explicit dependence of $\epsilon$ on $\delta$.  
Let $q$ be a power of an odd prime, $\mathbb{F}_q$ be the finite field with $q$ elements, and $A \subset \mathbb{F}_q$.  
In \cite{his}, it is shown that if 
$|A + A| = m$ and $|A \cdot A | = n$, then 
\begin{equation}\label{eq:his}
|A|^3 \leq \frac{ c m^2 n |A| }{ q} + c q^{1/2} mn
\end{equation}
where $c$ is some positive constant.  Inequality (\ref{eq:his}) implies a non-trival sum-product estimate when 
$q^{1/2} \ll |A| \ll q $.  We write $f \ll g$ if $f = o(g)$.  Using a graph theoretic approach, the fourth author \cite{vinh} and Vu \cite{vu} improved (\ref{eq:his}) and as a result, obtained a better sum-product estimate.     

\begin{theorem}[\cite{vinh}]\label{vinh th}
Let $q$ be a power of an odd prime.  If $A \subset \mathbb{F}_q$, $|A+A| = m$, and $|A \cdot A| = n$, then 
\[
|A|^2 \leq \frac{mn|A|}{q} + q^{1/2} \sqrt{ m n } .
\]
\end{theorem}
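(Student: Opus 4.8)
The plan is to use a spectral/eigenvalue argument on a bipartite Cayley-type graph built from the additive and multiplicative structure of $\mathbb{F}_q$. The key object is the "sum-product graph" $G$ whose two vertex classes are each a copy of $\mathbb{F}_q \times \mathbb{F}_q$, with $(a,b)$ in the first class joined to $(c,d)$ in the second class whenever $a + c = bd$ (or some similar bilinear relation tying together one addition and one multiplication). This graph is regular, and because the defining equation is a nondegenerate bilinear/quadratic condition over $\mathbb{F}_q$, standard character-sum estimates (Gauss/Kloosterman sum bounds, or equivalently an eigenvalue computation) show it is an $(n,d,\lambda)$-graph with $d \approx q$ and second eigenvalue $\lambda = O(q^{1/2} \cdot q^{1/2}) = O(q)$ — more precisely $d = q$ and $\lambda \le \sqrt{q}\cdot\sqrt{q}$ scaled appropriately so that the expander mixing lemma yields the stated bound. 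The sum-product estimate then falls out by applying the expander mixing lemma to cleverly chosen vertex subsets.

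**First I would** set up the graph precisely so that edges between a subset $U$ in the first part and $W$ in the second part correspond to solutions of an equation involving $A+A$ and $A\cdot A$. Concretely, take $U = W = A \times A$. An edge from $(a,b)$ to $(c,d)$ with $a+c = bd$ witnesses that the value $bd \in A\cdot A$ equals $a + c$; but $a+c$ need not lie in $A + A$ unless we are counting more carefully. The right setup (following Vinh) is: vertices $(a_1,a_2)$ joined to $(b_1,b_2)$ iff $a_1 + b_1 = a_2 b_2$. Then choosing $U$ to be pairs whose first coordinate ranges over $A+A$ appropriately and $W$ over $A \times A$, the number of edges $e(U,W)$ counts (with multiplicity) the number of ways elements of $A$ combine, which we can bound below by $|A|^2$ (every pair $(a_2, b_2) \in A\times A$ and every choice giving $a_1 + b_1 = a_2b_2$ contributes) and above via counting through the sets of size $m$ and $n$.

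**The key steps in order:** (1) Define the bipartite graph on $\mathbb{F}_q^2 \sqcup \mathbb{F}_q^2$ with adjacency $a_1 + b_1 = a_2 b_2$; show it is $q$-regular on $q^2 + q^2$ vertices. (2) Compute its eigenvalues — diagonalize using additive characters in the first coordinate and handle the resulting Gauss-sum-like expression in the second; show all nontrivial eigenvalues have absolute value at most $\sqrt{q}\cdot\sqrt{q}$, i.e. the relevant mixing parameter is $q^{1/2}$ after normalizing by the vertex-class size $q^2$. (3) State the expander mixing lemma for bipartite biregular graphs: $\bigl| e(U,W) - \frac{d}{N}|U||W| \bigr| \le \lambda \sqrt{|U||W|}$ where $N = q^2$. (4) Choose $U = \{(s, a_2) : s \in A+A,\ a_2 \in A\}$ and $W = \{(t, b_2): t \in \text{something}, b_2 \in A\}$ so that $e(U,W)$ over-counts solutions to $a_2 b_2 \in A\cdot A$ with the sum constraint, bounding $e(U,W)$ below by $|A|^2$ (for each of the $|A|^2$ pairs $(a_2,b_2)$, the product $a_2 b_2$ forces $a_1 + b_1$ to equal a fixed value which can be realized). (5) Plug $|U| \le m|A|$, $|W| \le n|A|$ (or the analogous split), $d = q$, $\lambda \approx q^{1/2}$ into the mixing lemma and rearrange to get $|A|^2 \le \frac{mn|A|}{q} + q^{1/2}\sqrt{mn}$.

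**The main obstacle** I anticipate is step (2): getting the eigenvalue bound exactly $q^{1/2}$ (not $q^{1/2+\epsilon}$ or with an extra constant) requires the character sum $\sum_{b_2} \psi(\lambda b_2) \psi'(\mu/b_2)$-type expression to be a clean Kloosterman/Gauss sum with the sharp square-root cancellation, and one must carefully handle the degenerate cases (zero coordinates, the trivial character) so they contribute to the main term $\frac{d}{N}|U||W|$ rather than to the error. A secondary subtlety is choosing $U$ and $W$ in step (4) so that the lower bound $e(U,W)\ge |A|^2$ is genuinely valid — one must ensure no solutions are lost at $b_2 = 0$ and that the multiplicity count is an honest lower bound; here working with $|A \setminus \{0\}|$ or noting $0$ contributes negligibly resolves it. Once the sharp $\lambda = q^{1/2}$ (relative to class size $q^2$, so the mixing error is $q^{1/2}\sqrt{|U||W|}$) is in hand, the rest is bookkeeping.
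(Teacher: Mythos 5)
Your spectral framework (steps 1--3) is sound and is essentially the machinery the paper uses: the paper works with the point-line incidence graph of the projective plane over $Q$ (eigenvalues $\pm(q+1)$ and $\pm q^{1/2}$, Lemma \ref{l1}) together with the bipartite expander mixing lemma, and your sum-product graph is just the affine part of that incidence graph (it is the $d=1$ case of the graph $\mathcal{DP}_Q(d,\gamma)$ from Section 4, whose identity $M^3=q^dM+q^{d-1}(q^d-1)P$ gives exactly the eigenvalue bound $q^{1/2}$ you want). The genuine gap is in steps (4)--(5). You take $|U|\leq m|A|$ and $|W|\leq n|A|$ but only establish $e(U,W)\geq |A|^2$ (one edge per pair $(a_2,b_2)\in A\times A$). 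Plugging into the mixing lemma gives $|A|^2\leq \frac{mn|A|^2}{q}+q^{1/2}|A|\sqrt{mn}$, which after dividing by $|A|$ is $|A|\leq \frac{mn|A|}{q}+q^{1/2}\sqrt{mn}$ --- weaker than the target by a full factor of $|A|$. Since $|U||W|=mn|A|^2$ is forced if the right-hand side is to come out as $\frac{mn|A|^2}{q}+q^{1/2}|A|\sqrt{mn}$, you must produce $|A|^3$ edges, not $|A|^2$.

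The missing idea is Elekes' construction (Lemma \ref{l3} of the paper): take the two sides asymmetrically, namely $U=\{(b\cdot a, b): a,b\in A\}$ of size $|A|^2$ (the lines $y=bx-ba$; injectivity needs $0\notin A$, a case handled separately) and $W=(A\cdot A)\times(A+A)$ of size $mn$ (the points), with adjacency $u_1+v_1=u_2v_2$. Every triple $(a,b,c)\in A^3$ gives a distinct edge via $ba+bc=b(a+c)$, so $e(U,W)\geq |A|^3$, and the mixing lemma then yields $|A|^3\leq \frac{mn|A|^2}{q}+q^{1/2}|A|\sqrt{mn}$, which is the theorem after dividing by $|A|$. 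Without this cubic lower bound on the edge count, the argument as you have set it up cannot reach the stated inequality.
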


\begin{corollary}[\cite{vinh}]\label{vinh cor}
If $q$ is a power of an odd prime and $A \subset \mathbb{F}_q$, then there is a positive constant $c$ such that the following hold.  
If $q^{1/2} \ll |A| < q^{2/3}$, then 
\[
\max \{ |A + A | , |A \cdot A | \} \geq \frac{ c |A|^2 }{q^{1/2}} .
\]
If $q^{2/3} \leq |A| \ll q$, then 
\[
\max \{ |A + A | , |A \cdot A | \} \geq  c ( q |A| )^{1/2} .
\]
\end{corollary}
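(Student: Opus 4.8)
The plan is to derive Corollary~\ref{vinh cor} directly from Theorem~\ref{vinh th} by plugging in the worst case and optimizing over the two terms. Write $M = \max\{|A+A|,|A\cdot A|\}$ and set $m = |A+A|$, $n = |A\cdot A|$, so that $m \le M$ and $n \le M$, hence $mn \le M^2$ and $\sqrt{mn} \le M$. Substituting these into Theorem~\ref{vinh th} yields
\[
|A|^2 \;\le\; \frac{mn|A|}{q} + q^{1/2}\sqrt{mn} \;\le\; \frac{M^2|A|}{q} + q^{1/2}M .
\]
So at least one of the two terms on the right is at least $\tfrac12 |A|^2$. If the first term dominates I would get $M \ge (q|A|/2)^{1/2}$, and if the second term dominates I would get $M \ge |A|^2/(2q^{1/2})$; in either case
\[
M \;\ge\; \min\left\{ \left(\tfrac{q|A|}{2}\right)^{1/2},\ \frac{|A|^2}{2q^{1/2}} \right\}.
\]

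Next I would determine which expression realizes the minimum. A short computation shows that $\frac{|A|^2}{2q^{1/2}} \le \left(\frac{q|A|}{2}\right)^{1/2}$ is equivalent to $|A|^{3/2} \le \sqrt{2}\,q$, i.e. $|A| \le 2^{1/3}q^{2/3}$. Thus for $q^{1/2} \ll |A| < q^{2/3}$ the minimum is the second term, giving $M \ge \frac{|A|^2}{2q^{1/2}}$; here the hypothesis $|A| \gg q^{1/2}$ is precisely what forces $\frac{|A|^2}{q^{1/2}} \gg |A|$, so the bound is non-trivial, and absorbing the factor $\tfrac12$ into the constant $c$ gives the first claimed inequality. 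For $q^{2/3} \le |A| \ll q$ and $|A| \ge 2^{1/3}q^{2/3}$ the minimum is the first term, giving $M \ge (q|A|/2)^{1/2} = c(q|A|)^{1/2}$, and the condition $|A| \ll q$ again guarantees $(q|A|)^{1/2} \gg |A|$.

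The only slightly delicate point — and I do not regard it as a real obstacle — is the short interval $q^{2/3} \le |A| < 2^{1/3}q^{2/3}$, which lies on the "wrong side" of the threshold computed above. There the bound we have is $M \ge \frac{|A|^2}{2q^{1/2}} \ge \frac{q^{5/6}}{2}$, while $(q|A|)^{1/2} \le 2^{1/6}q^{5/6}$ throughout this window, so $M \ge c(q|A|)^{1/2}$ still holds after shrinking $c$ by an absolute constant factor. Hence the main work is just bookkeeping of constants and checking the behavior near $|A| \asymp q^{2/3}$; everything substantive is already contained in Theorem~\ref{vinh th}.
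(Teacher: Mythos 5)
Your argument is correct and takes essentially the same route as the paper's proof of the quasifield analogue (Corollary \ref{main cor}): both plug the trivial relations between $mn$ and $\max\{m,n\}$ into Theorem \ref{vinh th} and split according to which of the two resulting terms dominates, with the crossover at $|A| \asymp q^{2/3}$. Your explicit check of the window $q^{2/3} \le |A| < 2^{1/3} q^{2/3}$ is a detail the paper's version avoids only because its choice of constants makes the two branches of the minimum cross exactly at $|A| = q^{2/3}$; you handle it correctly.
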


In the case that $q$ is a prime, Corollary \ref{vinh cor} was proved by Garaev \cite{garaev} using exponential sums and Rudnev gave an estimate for small sets \cite{rudnev3}.  
 Cilleruelo \cite{cil} also proved related results using dense Sidon sets in finite groups involving $\mathbb{F}_q$ and $\mathbb{F}_q^*$. 
In particular, versions of Theorem \ref{vinh th2} and (\ref{gyarmati}) (see below) are proved in \cite{cil}, as well as several other results concerning equations in $\mathbb{F}_q$ and sum-product estimates.  

Theorem \ref{vinh th} was proved using the following Szemer\'{e}di-Trotter type theorem in $\mathbb{F}_q$. 
\begin{theorem}[\cite{vinh}]\label{vinh th2}
Let $q$ be a power of an odd prime.  If $P$ is a set of points and $L$ is a set of lines in $\mathbb{F}_q^2$, then 
\[
| \{ (p , l ) \in P \times L : p \in l \} | \leq \frac{ |P| |L| }{q} + q^{1/2} \sqrt{ |P| |L| }.
\]
\end{theorem}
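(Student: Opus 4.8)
The plan is to reinterpret the quantity $\incidence(P,L):=|\{(p,l)\in P\times L:p\in l\}|$ as the number of edges between two vertex subsets of the point--line incidence bipartite graph of $\mathbb{F}_q^2$, and then to estimate this edge count by the expander mixing lemma, using the (small) second singular value of the incidence matrix.

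First I would set up the graph. Let $G$ be the bipartite graph whose two vertex classes are the set of all $q^2$ points of $\mathbb{F}_q^2$ and the set of all $q^2+q$ lines of $\mathbb{F}_q^2$, a point being adjacent to a line precisely when it lies on that line. Then $G$ is biregular: each point lies on $q+1$ lines and each line contains $q$ points. Let $M$ be the $q^2\times(q^2+q)$ point-by-line incidence matrix. The entry $(MM^T)_{p,p'}$ equals the number of lines through both $p$ and $p'$; since two distinct points of the affine plane lie on a unique common line and every point lies on $q+1$ lines, we get $MM^T=qI+J$. Hence $MM^T$ has eigenvalue $q^2+q$ with multiplicity $1$ (eigenvector the all-ones vector) and eigenvalue $q$ with multiplicity $q^2-1$, so the two largest singular values of $M$ are $\sigma_1=\sqrt{q(q+1)}$ and $\sigma_2=\sqrt q$.

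Next I would invoke the biregular bipartite expander mixing lemma. Writing $\mathbf 1_P$ and $\mathbf 1_L$ in terms of the top singular vectors of $M$ (which are proportional to the all-ones vectors on the two sides) plus orthogonal remainders, using that $M$ sends the orthogonal complement of its top right singular vector into the orthogonal complement of its top left singular vector with norm at most $\sigma_2=\sqrt q$, and using $\|\mathbf 1_P\|^2\le|P|$ and $\|\mathbf 1_L\|^2\le|L|$, one obtains
\[
\Bigl|\incidence(P,L)-\frac{e(G)}{q^2(q^2+q)}\,|P|\,|L|\Bigr|\ \le\ \sqrt q\,\sqrt{|P|\,|L|},
\]
where $e(G)=q^2(q+1)$ is the total number of point--line incidences. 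Since $\frac{e(G)}{q^2(q^2+q)}=\frac{q^2(q+1)}{q^3(q+1)}=\frac1q$, this reads
\[
\incidence(P,L)\ \le\ \frac{|P|\,|L|}{q}+q^{1/2}\sqrt{|P|\,|L|},
\]
which is the claim. (Alternatively one may simply quote the standard biregular expander mixing lemma instead of rederiving it.)

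The argument is short and there is no genuine obstacle; the only points needing care are the identity $MM^T=qI+J$ — i.e.\ the affine-plane incidence axioms — and checking that the main term in the mixing lemma comes out to be exactly $|P|\,|L|/q$. One could equally well run the same computation inside $PG(2,q)$, whose incidence graph is $(q+1)$-regular on $q^2+q+1$ vertices per side with second eigenvalue $\sqrt q$, at the cost of replacing the clean $1/q$ by $\frac{q+1}{q^2+q+1}<\frac1q$, which still yields the stated bound.
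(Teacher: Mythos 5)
Your proof is correct, and at its core it is the same spectral argument the paper uses for its quasifield generalization (Theorem \ref{main th2}, of which the stated result is the $\mathbb{F}_q$ special case): count incidences as edges of a point--line incidence graph and apply an expander mixing lemma with error term governed by a second eigenvalue/singular value equal to $\sqrt{q}$. The difference is which incidence graph you work in. The paper completes the affine plane to the projective plane coordinatized by $Q$, whose incidence graph is $(q+1)$-regular on both sides, so that its Lemma \ref{bipartite expander mixing lemma} (proved only for \emph{regular} bipartite graphs) applies verbatim, with the spectrum quoted from the literature (Lemma \ref{l1}); the resulting main term is $\frac{(q+1)|P||L|}{q^2+q+1}<\frac{|P||L|}{q}$. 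You stay in the affine incidence graph, which is biregular ($q+1$ versus $q$), so you cannot invoke the paper's mixing lemma as stated and must use the biregular, singular-value form; in exchange you get the main term exactly $|P|\,|L|/q$ and a self-contained spectrum computation from $MM^T=qI+J$ in place of a citation. Both routes are sound, and your closing parenthetical about $PG(2,q)$ is precisely the paper's route. The one item you should actually write out rather than wave at is the biregular mixing lemma itself --- in particular that the cross terms vanish because the all-ones vectors on the two sides form the top singular vector pair --- since that is the only ingredient not literally covered by the lemma proved in the paper.
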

We remark that a Szemer\'{e}di-Trotter type theorem in $\mathbb{Z}_p$ was obtained in \cite{bkt} using the sum-product estimate (\ref{eq:bkt}).  

In this paper, we generalize Theorem \ref{vinh th}, Corollary \ref{vinh cor}, and Theorem \ref{vinh th2} to finite quasifields.  We recall the definition of a quasifield now: A set $L$ with a binary operation $\cdot$ is called a \emph{loop} if 
\begin{enumerate}
\item the equation $a \cdot x = b$ has a unique solution in $x$ for every $a,b \in L$,
\item the equation $y \cdot a = b$ has a unique solution in $y$ for every $a,b \in L$, and
\item there is an element $e \in L$ such that $e \cdot x = x \cdot e = x$ for all $x \in L$.
\end{enumerate}
A \emph{(left) quasifield} $Q$ is a set with two binary operations $+$ and $\cdot$ such that $(Q , + )$ is a group with additive identity 0, $(Q^* , \cdot )$ is a loop where $Q^* = Q \backslash \{0 \}$, and the following three conditions hold:
\begin{enumerate}
\item $a \cdot (b + c) = a \cdot b + a \cdot c$ for all $a,b,c \in Q$,
\item $0 \cdot x = 0$ for all $x \in Q$, and 
\item the equation $a \cdot x = b \cdot x + c$ has exactly one solution for every $a,b,c \in Q$ with $a \neq b$.  
\end{enumerate}

Any finite field is a quasifield.  There are many examples of quasifields which are not fields; see for example, Chapter 5 of \cite{demb} or Chapter 9 of \cite{hp}.  Quasifields appear extensively in the theory of projective planes. We note that in particular, in a quasifield multiplication need not be commutative nor associative. Throughout the paper we must be careful about which side multiplication takes place on, and be wary that multiplicative inverses need not exist on both sides. Nonassociativity of multiplication is a bigger problem. Previous research on sum-product estimates requires associativity of multiplication for tools such as Pl\"unnecke's inequality (see for example, \cite{tao} for the most general known sum-product theorem, the proof of which uses associativity of multiplication throughout).

\begin{theorem}\label{main th}
Let $Q$ be a finite quasifield with $q$ elements.  If $A \subset Q \backslash \{0 \}$, $|A+A| = m$, and $|A \cdot A | = n$, then 
\[
|A|^2 \leq \frac{mn |A| }{q} + q^{1/2}\sqrt{mn}.
\]
\end{theorem}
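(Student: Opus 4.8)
The strategy is to mirror the finite-field argument: deduce the sum–product estimate from a Szemerédi–Trotter type incidence bound in the quasifield plane $Q^2$. So the first task is to establish the analogue of Theorem \ref{vinh th2} for quasifields, namely that for any point set $P$ and line set $L$ in $Q^2$,
\[
|\{(p,\ell)\in P\times L : p\in \ell\}| \le \frac{|P||L|}{q} + q^{1/2}\sqrt{|P||L|}.
\]
The natural route is spectral: build a bipartite "incidence graph" $G$ on vertex classes $Q^2$ (points) and the set of lines, with an edge when a point lies on a line. For finite fields this graph is (after suitable bookkeeping) a nice pseudorandom bipartite graph — essentially a bipartite Cayley-type graph — whose nontrivial eigenvalues are $O(q^{1/2})$, and then the expander mixing lemma gives the incidence bound. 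The key structural input I would use is that in a quasifield of order $q$, the affine plane it coordinatizes has $q^2$ points, $q^2+q$ lines, each line has $q$ points, each point is on $q+1$ lines, and two distinct points lie on exactly one common line. From these design-theoretic parameters one can compute that the point–line incidence graph, restricted appropriately (e.g. to the $q^2$ lines in "non-vertical" parallel classes, or by collapsing by parallel class), is biregular with the right degrees and that $AA^{T}$ has the eigenvalue structure of a $2$-design, forcing the second eigenvalue to be exactly $q^{1/2}$. This is where I expect the main work: the combinatorial geometry of quasifields is weaker than that of fields (multiplication need not be associative, only one distributive law holds, there are no subplanes or nice automorphisms to exploit), so I cannot lean on Fourier analysis over $\mathbb{F}_q$. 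The safe path is to phrase everything in terms of the $2$-design axioms, which quasifields do satisfy, and get the eigenvalue bound from the design identity $AA^{T} = (r-\lambda)I + \lambda J$ rather than from any algebraic structure.

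Granting the incidence theorem, the deduction of Theorem \ref{main th} is the standard Elekes-type reduction. Set $B = A+A$ and $C = A\cdot A$, so $|B| = m$ and $|C| = n$. Consider the point set $P = B \times C \subset Q^2$, which has $|P| = mn$, and for each pair $(a,a') \in A\times A$ consider the line $\ell_{a,a'}$ defined (in the quasifield coordinates) by $y = a\cdot(x - a')$, i.e. the line through the points $\{(x, a\cdot(x-a')) : x \in Q\}$; let $L$ be the set of these lines, so $|L| \le |A|^2$. The point is that for every choice of $b \in A$ used inside the first coordinate, the point $(a' + b,\ a\cdot b)$ lies on $\ell_{a,a'}$ and its first coordinate $a'+b$ is in $A+A = B$ while its second coordinate $a\cdot b$ is in $A\cdot A = C$; hence each line $\ell_{a,a'}$ carries at least $|A|$ points of $P$. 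This forces the incidence count $\mathcal I(P,L)$ to be at least $|A|\cdot|A|^2 = |A|^3$ — modulo checking that the $|A|^2$ lines $\ell_{a,a'}$ are genuinely distinct, which uses $A \subset Q\setminus\{0\}$ (the slope $a \ne 0$ recovers $a$, then $a'$) and the presence of the multiplicative inverse in a quasifield. The restriction to $A \subset Q\setminus\{0\}$ in the statement is there precisely to make this injectivity work and to make "$y = a\cdot(x-a')$" a legitimate line.

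Combining the two halves: $|A|^3 \le \mathcal I(P,L) \le \frac{|P||L|}{q} + q^{1/2}\sqrt{|P||L|} \le \frac{mn|A|^2}{q} + q^{1/2}\sqrt{mn\,|A|^2} = \frac{mn|A|^2}{q} + q^{1/2}|A|\sqrt{mn}$, and dividing through by $|A|$ yields exactly $|A|^2 \le \frac{mn|A|}{q} + q^{1/2}\sqrt{mn}$, as desired. The routine care-points are: confirming that $y = a\cdot(x-a')$ does define a line of the affine plane coordinatized by $Q$ (this is built into how quasifields coordinatize affine planes — these are among the standard lines), and that distinct $(a,a')$ give distinct lines, for which one evaluates at two well-chosen values of $x$ and cancels using left distributivity and the existence of unique solutions to $a\cdot x = c$ for $a \ne 0$. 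The genuine obstacle, as noted, is the spectral/incidence step; once the point–line incidence graph of the quasifield plane is identified as the bipartite graph of a $2$-$(q^2, q, 1)$ design (or the relevant regular subgraph thereof), the eigenvalue bound and the expander mixing lemma finish it in the same way as over $\mathbb{F}_q$.
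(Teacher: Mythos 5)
Your proposal is correct and follows essentially the same route as the paper: a spectral bound on the point--line incidence graph of the plane coordinatized by $Q$ (the paper uses the projective completion, which is $(q+1)$-regular, and quotes the known eigenvalues $\pm q^{1/2}$ rather than rederiving them from the design identity), combined with the bipartite expander mixing lemma and the Elekes construction with lines $y = a\cdot(x-a') = a\cdot x - a\cdot a'$ and points $(A+A)\times(A\cdot A)$. The distinctness of the $|A|^2$ lines and the membership $(a'+b,\,a\cdot b)\in \ell_{a,a'}$ are verified in the paper exactly as you indicate, using left distributivity and the loop structure on $Q^*$.
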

\medskip

Theorem \ref{main th} gives the following sum-product estimate.

\begin{corollary}\label{main cor}
Let $Q$ be a finite quasifield with $q$ elements and $A \subset Q \backslash \{0 \}$.  There is a positive constant $c$ such that the following hold.   

If $q^{1/2} \ll |A| < q^{2/3}$, then 
\[
\max \{ |A + A | , |A \cdot A | \} \geq c  \frac{ |A|^2 }{ q^{1/2} } .  
\]

 If $q^{2/3} \leq |A| \ll q$, then 
\[
\max \{ |A + A| , |A \cdot A| \} \geq c ( q |A| )^{1/2}.
\]
\end{corollary}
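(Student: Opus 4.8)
The plan is to derive Corollary \ref{main cor} from Theorem \ref{main th} by a short optimization argument; in fact the deduction is identical to the way Corollary \ref{vinh cor} follows from Theorem \ref{vinh th}. Set $m = |A+A|$, $n = |A \cdot A|$, and $K = \max\{m, n\}$, so that $mn \le K^2$ and $\sqrt{mn} \le K$. Feeding these into the conclusion of Theorem \ref{main th} gives
\[
|A|^2 \ \le \ \frac{K^2 |A|}{q} + q^{1/2} K .
\]
Since any inequality $X \le Y + Z$ forces $X \le 2\max\{Y, Z\}$, at least one of the two bounds $|A|^2 \le 2 K^2 |A|/q$ and $|A|^2 \le 2 q^{1/2} K$ must hold. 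The first rearranges to $K \ge (q|A|)^{1/2}/\sqrt{2}$ and the second to $K \ge |A|^2/(2q^{1/2})$, so in every case
\[
K \ \ge \ \frac{1}{2}\min\left\{ (q|A|)^{1/2}, \ \frac{|A|^2}{q^{1/2}} \right\} .
\]

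It then remains only to decide which term in the minimum is active. Comparing the two quantities, $|A|^2/q^{1/2} \le (q|A|)^{1/2}$ holds exactly when $|A|^{3/2} \le q$, i.e.\ when $|A| \le q^{2/3}$. Hence if $q^{1/2} \ll |A| < q^{2/3}$ the minimum equals $|A|^2/q^{1/2}$ and we obtain $\max\{|A+A|, |A\cdot A|\} \ge \frac{1}{2}|A|^2/q^{1/2}$, while if $q^{2/3} \le |A| \ll q$ the minimum equals $(q|A|)^{1/2}$ and we obtain $\max\{|A+A|, |A\cdot A|\} \ge \frac{1}{2}(q|A|)^{1/2}$; absorbing the factor $\frac{1}{2}$ into the constant $c$ yields the two stated estimates. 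The hypotheses $q^{1/2} \ll |A|$ and $|A| \ll q$ are used only to guarantee that these lower bounds are asymptotically larger than the trivial bound $|A|$, so that the corollary is non-vacuous; they play no role in the algebra itself.

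All of the substance is contained in Theorem \ref{main th}, so there is no genuine obstacle in this deduction. The only points requiring a little care are the case distinction near $|A| = q^{2/3}$ and the (harmless) bookkeeping of absolute constants; one should also confirm that the $o(1)$ implicit in the $\ll$ notation is strong enough to make both ranges of $|A|$ nonempty and both conclusions meaningful.
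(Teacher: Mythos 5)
Your proposal is correct and follows essentially the same route as the paper: both deduce the corollary from Theorem \ref{main th} by a two-term optimization showing $\max\{|A+A|,|A\cdot A|\}\geq c\min\{(q|A|)^{1/2},\,|A|^2/q^{1/2}\}$ and then noting the minimum switches at $|A|=q^{2/3}$. The paper phrases the intermediate step as a lower bound on the product $|A+A||A\cdot A|$ (with a constant satisfying $c+c^{1/2}<1$) while you bound $mn\leq K^2$ and use that one of two summands must be at least half the total, but this is only a cosmetic difference in bookkeeping the constants.
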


From Corollary \ref{main cor} we conclude that any algebraic object that is rich enough to coordinatize a projective plane must satisfy a non-trivial sum-product estimate. Following \cite{vinh}, we prove a Szemer\'{e}di-Trotter type theorem and then use it to deduce Theorem \ref{main th}. We note that the connection between arithmetic combinatorics and incidence geometry was studied in a general form in \cite{rudnev1}. We also note that many authors have studied more general incidence theorems and their relationship to arithmetic combinatorics (cf \cite{rudnev4, rudnev5, rudnev6, rudnev7}).

\begin{theorem}\label{main th2}
Let $Q$ be a finite quasifield with $q$ elements.   If $P$ is a set of points and $L$ is a set of lines in $Q^2$, then 
\[
| \{ (p , l ) \in P \times L : p \in l \} | \leq \frac{ |P| |L| }{q} + q^{1/2} \sqrt{ |P| |L| }.
\]
\end{theorem}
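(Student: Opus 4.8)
The plan is to follow the spectral method of \cite{vinh}, replacing the incidence structure of $\mathbb{F}_q^2$ by that of the affine plane coordinatized by $Q$. First I would recall the classical correspondence (see Chapter 5 of \cite{demb} or Chapter 9 of \cite{hp}) between finite quasifields and affine translation planes: a quasifield $Q$ with $q$ elements coordinatizes an affine plane $A(Q)$ whose point set is $Q^2$ and whose lines are exactly the sets $\{(x,y): y = x\cdot m + k\}$ for $m,k \in Q$ together with the ``vertical'' lines $\{(x,y): x = c\}$ for $c \in Q$ --- that is, precisely the ``lines in $Q^2$'' appearing in the statement. It is here that the quasifield axioms (the additive group $(Q,+)$, left distributivity, and the loop property of $(Q\setminus\{0\},\cdot)$) are used: they guarantee that $A(Q)$ is a genuine affine plane of order $q$, so it has $q^2$ points and $q^2+q$ lines, each line has $q$ points, each point lies on $q+1$ lines, and any two distinct points lie on a unique common line. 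After this step the argument is linear algebra, identical to the finite-field case.

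Next I would form the $q^2 \times (q^2+q)$ point--line incidence matrix $N$ of $A(Q)$. The affine-plane axioms give at once the identity $NN^{T} = qI + J$: the $(p,p)$ entry counts the $q+1$ lines through $p$, and the $(p,p')$ entry with $p \neq p'$ counts their unique common line. Consequently $NN^{T}$ has eigenvalue $q^2+q$ with multiplicity $1$ and eigenvalue $q$ with multiplicity $q^2-1$; equivalently, the singular values of $N$ are $\sqrt{q(q+1)}$ (once) and $q^{1/2}$ (with multiplicity $q^2-1$). Since moreover $N\mathbf{1} = (q+1)\mathbf{1}$ and $N^{T}\mathbf{1} = q\mathbf{1}$, the normalized all-ones vectors are the top left and right singular vectors of $N$.

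Finally I would apply the bipartite expander mixing lemma. For $P \subseteq Q^2$ and a set $L$ of lines, the number of incident pairs is $I(P,L) = \mathbf{1}_P^{T} N\, \mathbf{1}_L$. Writing $\mathbf{1}_P = \tfrac{|P|}{q^2}\mathbf{1} + f$ and $\mathbf{1}_L = \tfrac{|L|}{q^2+q}\mathbf{1} + g$ with $f, g \perp \mathbf{1}$, the two cross terms vanish because $\mathbf{1}$ is a singular vector of $N$; the main term equals $\tfrac{|P||L|}{q^2(q^2+q)}$ times the total number of incidences $q^2(q+1)$, which is exactly $\tfrac{|P||L|}{q}$; and the remaining term satisfies $|f^{T} N g| \le q^{1/2}\|f\|\,\|g\| \le q^{1/2}\sqrt{|P||L|}$, using that $q^{1/2}$ is the second singular value together with $\|f\| \le \|\mathbf{1}_P\| = |P|^{1/2}$ and $\|g\| \le |L|^{1/2}$. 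Adding these contributions yields $I(P,L) \le \tfrac{|P||L|}{q} + q^{1/2}\sqrt{|P||L|}$, as claimed.

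The eigenvalue computation and the mixing-lemma estimate are routine; the one genuine obstacle is the first step. One must check that the incidence structure on $Q^2$ really is an affine plane of order $q$ --- in particular that two points always lie on a common line and that there are exactly $q+1$ parallel classes --- since this is the feature that can fail for a weaker algebraic structure, and it is precisely what makes ``coordinatizing a plane'' the right hypothesis. Everything downstream is insensitive to non-commutativity or non-associativity of the multiplication in $Q$, which is why the proof of \cite{vinh} carries over once $A(Q)$ is in hand.
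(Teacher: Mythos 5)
Your argument is correct, but it takes a somewhat different route from the paper. The paper does not work with the affine incidence matrix at all: it passes to the projective plane $\Pi$ coordinatized by $Q$ (via the Hughes--Piper incidence rules $(x,y)\,\mathcal{I}\,[m,k] \iff m\cdot x + y = k$), quotes the known spectrum of the point--line incidence graph of a projective plane of order $q$ (eigenvalues $\pm(q+1)$ once each, all others $\pm q^{1/2}$), and applies a bipartite expander mixing lemma for \emph{regular} bipartite graphs; the affine points and lines are then embedded into $\mathcal{P}$ and $\mathcal{L}$ by the sign-flip $(p_1,p_2)\mapsto(p_1,-p_2)$, $r(a,b)\mapsto[b,-a]$, and the main term $\frac{(q+1)|P||L|}{q^2+q+1}\le \frac{|P||L|}{q}$ falls out. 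You instead stay entirely in the affine plane $A(Q)$, derive $NN^{T}=qI+J$ from the axioms, and run a singular-value version of the mixing lemma on the biregular rectangular matrix $N$. Your version buys a cleaner main term (exactly $|P||L|/q$, with no projective completion and no coordinate gymnastics) at the cost of having to handle a non-square, non-regular incidence matrix by hand; the paper's version buys the ability to reuse an off-the-shelf lemma about projective planes and a standard regular mixing lemma, which it then recycles for several other theorems. Both arguments rest on the same geometric input, namely that two points of $Q^2$ lie on a unique line, and you correctly identify this as the one place the quasifield axioms enter. One small slip: you write the lines as $y = x\cdot m + k$, with the slope on the right; for a \emph{left} quasifield (left distributive law only) the two-points-one-line property is verified for lines of the form $y = m\cdot x + k$, slope on the left, which is the convention the statement and the paper use. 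With that convention your computation $NN^{T}=qI+J$ goes through exactly as you describe.
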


Another consequence of Theorem \ref{main th2} is the following corollary.

\begin{corollary}\label{v2:cor 5}
If $Q$ is a finite quasifield with $q$ elements and $A \subset Q$, then there is a positive constant $c$ such that 
\[
| A \cdot (A + A ) | \geq c\min \left\{ q , \frac{ |A|^3 }{q} \right\}.
\]
Further, if $|A|\gg q^{2/3}$, then one may take $c = 1+o(1)$.
\end{corollary}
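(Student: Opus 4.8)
\medskip
\noindent\textbf{Proof sketch for Corollary~\ref{v2:cor 5}.}
The plan is to derive the estimate from the incidence bound of Theorem~\ref{main th2} by choosing a point set $P$ and a line set $L$ in $Q^2$ whose incidences count essentially all triples in $A^3$. Write $k=|A|$ and $t=|A\cdot(A+A)|$; we may assume $k\ge 2$, as $k\le 1$ is trivial. Take $P = A\times\bigl(A\cdot(A+A)\bigr)$, so that $|P|=kt$, and let $L$ be the set of lines $\ell_{a,c}: y = a\cdot x + a\cdot c$ with $a\in A\backslash\{0\}$ and $c\in A$, so that $|L|\le k^2$. The point of this choice is that, by the distributive law in $Q$, for every $a\in A\backslash\{0\}$ and all $b,c\in A$ the point $\bigl(b,\,a(b+c)\bigr)=\bigl(b,\,a\cdot b+a\cdot c\bigr)$ lies on $\ell_{a,c}$, and its second coordinate $a(b+c)$ lies in $A\cdot(A+A)$, so this point belongs to $P$. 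Moreover, since $a\neq 0$ makes $x\mapsto a\cdot x$ a bijection of $Q$, the line $\ell_{a,c}$ determines the pair $(a,c)$ (its slope is $a$ and its intercept $a\cdot c$), and then the incident point determines $b$; hence distinct triples $(a,b,c)$ with $a\neq 0$ yield distinct incidences, and therefore $|\{(p,\ell)\in P\times L:p\in\ell\}|\ge (k-1)k^2$.

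Feeding $P$ and $L$ into Theorem~\ref{main th2}, and using that its right-hand side is increasing in $|L|$ to replace $|L|$ by $k^2$, I obtain
\[
(k-1)k^2 \ \le\ \frac{k^3 t}{q}+q^{1/2}\sqrt{k^3 t}.
\]
Dividing by $k^3$ yields $\tfrac{k-1}{k}\le \tfrac{t}{q}+\tfrac{q^{1/2}t^{1/2}}{k^{3/2}}$, hence $\tfrac12\le \tfrac{t}{q}+\tfrac{q^{1/2}t^{1/2}}{k^{3/2}}$ for $k\ge 2$. A dichotomy now finishes the first assertion: either $\tfrac{t}{q}\ge\tfrac14$, giving $t\ge q/4$, or $\tfrac{q^{1/2}t^{1/2}}{k^{3/2}}\ge\tfrac14$, which after squaring gives $t\ge k^3/(16q)$; in either case $t\ge\tfrac1{16}\min\{q,\,k^3/q\}$, so any $c\le \tfrac1{16}$ works.

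For the sharper constant when $|A|\gg q^{2/3}$, observe first that then $k^3/q\ge q$, so $\min\{q,\,k^3/q\}=q$ and it suffices to prove $t\ge(1-o(1))q$ (recall $t\le q$ since $A\cdot(A+A)\subseteq Q$). In that regime $\tfrac{k-1}{k}=1-o(1)$, while $t\le q$ and $k^{3/2}\gg q$ give $\tfrac{q^{1/2}t^{1/2}}{k^{3/2}}\le\tfrac{q}{k^{3/2}}=o(1)$; substituting into the displayed inequality gives $\tfrac{t}{q}\ge 1-o(1)$, which is the claimed bound with $c=1-o(1)$.

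The step I expect to be the crux is the choice of configuration, not the estimates. The ``naive'' family of the $k$ lines $y=a\cdot x$ ($a\in A$) through $A\times(A\cdot(A+A))$ produces only about $k\,|A+A|$ incidences, which through Theorem~\ref{main th2} would give merely $|A\cdot(A+A)|\gtrsim |A|^2/q$; the improvement to $|A|^3/q$ comes precisely from enlarging the line set to the two-parameter family $\{\ell_{a,c}\}$, and this is where the distributive law of the quasifield enters essentially (it is what lets the second coordinate $a\cdot b + a\cdot c$ be rewritten as $a(b+c)$). The remainder is bookkeeping, though one must keep a little care with the element $0\in A$ (handled above by restricting to $a\neq 0$) and with the conventions for lines and for the distributive law fixed earlier in the paper; neither affects the argument.
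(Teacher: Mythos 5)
Your proposal is correct and follows essentially the same route as the paper: the same point set $A\times(A\cdot(A+A))$ and the same two-parameter family of lines $y=a\cdot x+a\cdot c$ (the paper writes them as $y=b\cdot(x+c)$), fed into Theorem~\ref{main th2}. The only differences are cosmetic --- you exclude $a=0$ and finish with a dichotomy, whereas the paper counts $|A|^3$ incidences and solves the resulting quadratic in $\sqrt{z}$ exactly; if anything your handling of the degenerate slope $a=0$ is a bit more careful than the paper's.
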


The next result generalizes Theorem 1.1 from \cite{vinh 3 var}.  

\begin{theorem}\label{v2:theorem 4}
Let $Q$ be a finite quasifield with $q$ elements.  If $A , B , C \subset Q$, then 
\[
| A + B \cdot C | \geq q - \frac{q^3}{|A||B||C| + q^2}
\]
\end{theorem}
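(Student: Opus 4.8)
The plan is to reduce the statement, via Cauchy--Schwarz, to a second moment estimate for the number of representations $a + b\cdot c = d$, and then to prove that estimate by Fourier analysis on the additive group of $Q$, with the quasifield axioms entering precisely where a character sum must be shown to cancel. Concretely, put $D = A + B\cdot C$ and, for $d \in Q$, set $r(d) = |\{(a,b,c)\in A\times B\times C : a + b\cdot c = d\}|$, so that $\sum_{d\in Q} r(d) = |A||B||C| =: X$, the function $r$ vanishes off $D$, and Cauchy--Schwarz gives $X^2 = \big(\sum_{d\in D} r(d)\big)^2 \le |D|\sum_{d\in Q} r(d)^2$. Since $q - \frac{q^3}{X+q^2} = \frac{qX}{X+q^2}$, it suffices to prove the bound
\[
\sum_{d\in Q} r(d)^2 \;\le\; \frac{X^2}{q} + qX ,
\]
because then $|D| \ge X^2/(X^2/q + qX) = qX/(X+q^2)$. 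The additive group $(Q,+)$ is a finite abelian group of order $q$ (indeed an elementary abelian $p$-group, $q = p^k$), so it has a full character group $\widehat Q$ and we may run the estimate with additive characters.

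Next I would Fourier-expand $r$. With $\alpha(\chi) := \sum_{a\in A}\chi(a)$ and, the key quantity, $S(\chi) := \sum_{b\in B,\, c\in C}\chi(b\cdot c)$, orthogonality of characters gives $r(d) = \frac1q\sum_{\chi\in\widehat Q}\alpha(\chi)\,S(\chi)\,\overline{\chi(d)}$, whose trivial-character term is exactly $X/q$. Parseval then yields
\[
\sum_{d\in Q}\Big(r(d) - \tfrac Xq\Big)^2 \;=\; \frac1q\sum_{\chi \neq \chi_0} |\alpha(\chi)|^2\,|S(\chi)|^2 ,
\]
and since the left side equals $\sum_d r(d)^2 - X^2/q$ while $\sum_{\chi\neq\chi_0}|\alpha(\chi)|^2 \le \sum_{\chi}|\alpha(\chi)|^2 = q|A|$, everything comes down to the single inequality $|S(\chi)|^2 \le q\,|B|\,|C|$ for $\chi \neq \chi_0$: this gives $\sum_d (r(d)-X/q)^2 \le \frac1q\cdot q|B||C|\cdot q|A| = qX$, hence the desired second moment bound, and the Cauchy--Schwarz inequality above then finishes the theorem.

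The heart of the matter, and the step I expect to be the real obstacle, is the character sum bound $|S(\chi)|^2 \le q|B||C|$, which uses that $Q$ is a quasifield and not merely a loop carrying a compatible addition. Writing $S(\chi) = \sum_{b\in B} T_b(\chi)$ with $T_b(\chi) = \sum_{c\in C}\chi(b\cdot c)$, I would apply Cauchy--Schwarz in $b$ and then extend the $b$-sum to all of $Q$:
\[
|S(\chi)|^2 \;\le\; |B|\sum_{b\in B}|T_b(\chi)|^2 \;\le\; |B|\sum_{b\in Q}|T_b(\chi)|^2 \;=\; |B|\sum_{c,\,c'\in C}\;\sum_{b\in Q}\chi(b\cdot c)\,\overline{\chi(b\cdot c')} .
\]
Now the left distributive law of $Q$ gives $\chi(b\cdot c)\,\overline{\chi(b\cdot c')} = \chi\big(b\cdot c - b\cdot c'\big) = \chi\big(b\cdot(c-c')\big)$; for $c = c'$ the inner sum is $q$, while for $c \neq c'$ the map $b \mapsto b\cdot(c-c')$ is a bijection of $Q$ (right multiplication by the nonzero element $c - c'$ permutes $Q\setminus\{0\}$ because $(Q\setminus\{0\},\cdot)$ is a loop, and it fixes $0$), so $\sum_{b\in Q}\chi(b\cdot(c-c')) = \sum_{x\in Q}\chi(x) = 0$. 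Hence $\sum_{b\in Q}|T_b(\chi)|^2 = q|C|$ and $|S(\chi)|^2 \le q|B||C|$, as needed. If the convention in force makes $Q$ right- rather than left-distributive, the same computation with the roles of $b$ and $c$ interchanged --- summing over $c\in Q$ first --- applies, since the cancellation is symmetric. Everything else is routine algebra. I note that $|S(\chi)|^2 \le q|B||C|$ is, up to normalization, the spectral gap of the ``sum--product'' Cayley graph attached to $Q$, so the argument can equivalently be organized around that graph and the expander mixing lemma, as in \cite{vinh 3 var}.
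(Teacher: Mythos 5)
Your proposal is correct, and the reduction is the same as the paper's: both arguments apply Cauchy--Schwarz to the representation function $r(d)=|\{(a,b,c)\in A\times B\times C: a+b\cdot c=d\}|$ and then establish the identical second-moment bound $\sum_d r(d)^2\le \frac{X^2}{q}+qX$ with $X=|A||B||C|$. Where you diverge is in how that bound is proved. The paper interprets $\sum_d r(d)^2$ as the number of edges between $S=\{(b,b',a)_X\}$ and $T=\{(-c,c',a')_Y\}$ in the product graph $\mathcal{DP}_Q(2,0)$, and invokes the bipartite expander mixing lemma together with the eigenvalue bound $\lambda\le q^{d/2}$, which is itself obtained by a combinatorial computation of $M^3=q^dM+q^{d-1}(q^d-1)P$. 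You instead work directly with additive characters of $(Q,+)$ and reduce everything to the bound $|S(\chi)|^2\le q|B||C|$ for $\chi\ne\chi_0$; your verification of that bound is sound: the identity $b\cdot c-b\cdot c'=b\cdot(c-c')$ uses exactly the left distributive law and $x\cdot(-y)=-(x\cdot y)$, and the vanishing of $\sum_{b\in Q}\chi(b\cdot(c-c'))$ for $c\ne c'$ uses that right multiplication by a nonzero element is a bijection of $Q$, which is guaranteed by the loop axiom for $(Q^*,\cdot)$. (Your parenthetical that $(Q,+)$ is elementary abelian is true but not needed; abelian suffices for the character theory, and the paper records that $(Q,+)$ is abelian.) The trade-off is that your Fourier argument is self-contained and somewhat shorter for this one theorem, while the paper's graph $\mathcal{DP}_Q(d,\gamma)$ is set up once and then reused for Theorems \ref{v2:theorem 1}, \ref{v2:theorem 2}, and \ref{v2:theorem 3}; as you note, the two are really the same estimate, since your character sum bound is precisely the spectral gap of that graph computed by diagonalizing over the abelian group $(Q,+)^{d+1}$.
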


We note that Corollary \ref{v2:cor 5} applies to elements of the form $a\cdot b + a\cdot c$ where $a,b,c\in A$ and Theorem \ref{v2:theorem 4} applies to elements of the form $a+b\cdot c$ where $a\in A$, $b\in B$, and $c\in C$. Theorem \ref{v2:theorem 4} does not use our Szemer\'edi-Trotter Theorem, and its proof allows for the more general result of taking three distinct sets, whereas Corollary \ref{v2:cor 5} is not as flexible, but gives a better estimate when $|A|$ is between $q^{1/3}$ and $q^{2/3}$. The spirit of these two results is similar, though it is not clear in the setting of a quasifield that the sets $A\cdot (A+A)$ and $A+A\cdot A$ should necessarily behave the same way (it is also not clear that they shouldn't).

Our methods in proving the above results can be used to generalize theorems concerning the solvability of equations over finite fields.  Let $p$ be a prime and let $A , B , C , D \subset \mathbb{Z}_p$.  S\'{a}rk\"{o}zy \cite{sar} proved that 
if $N(A,B,C,D)$ is the number of solutions to $a + b  = cd$ with $(a,b,c,d) \in A \times B \times C \times D$, then 
\begin{equation}\label{gyarmati}
\left| N(A,B,C,D) - \frac{ |A| |B| |C| |D| }{p} \right| \leq p^{1/2} \sqrt{ |A| |B| |C| |D| }.
\end{equation}
In particular, if $|A| |B| |C| |D| > p^3$, then there is 
an $(a,b,c,d) \in A \times B \times C \times D$ such that $a+b = cd$.
This is best possible up to a constant factor (see \cite{sar}).  It was generalized to finite fields of odd prime power order by 
Gyarmati and S\'{a}rk\"{o}zy \cite{gs}, and then by the fourth author \cite{vinh2} to systems of equations over $\mathbb{F}_q$.  Here we generalize the result of Gyarmati and S\'{a}rk\"{o}zy to finite quasifields.  

\begin{theorem}\label{main th3}
Let $Q$ be a finite quasifield with $q$ elements and let $A , B , C, D \subset Q$.  If $\gamma \in Q$ and 
$N_{ \gamma } (A,B,C,D)$ is the number of solutions to 
$a + b + \gamma = c \cdot d$ with $a \in A$, $b \in B$, $c \in C$, and $d \in D$, then 
\[
\left| N_{ \gamma} (A , B , C, D)  - \frac{ (q+1)|A| |B| |C| |D| }{q^2 + q + 1} \right| \leq q^{1/2} \sqrt{ |A| |B| |C| |D| }.
\]
\end{theorem}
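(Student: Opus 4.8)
The plan is to turn $N_\gamma(A,B,C,D)$ into a point--line incidence count inside the projective plane $\Pi$ of order $q$ coordinatized by $Q$, and then apply the expander mixing lemma to the incidence graph of $\Pi$. This is essentially the graph behind Theorem \ref{main th2}, but I would use it in its exact two-sided form so that the sharp main term $\frac{(q+1)|A||B||C||D|}{q^2+q+1}$ appears, rather than the slightly lossy $\frac{|A||B||C||D|}{q}$ that Theorem \ref{main th2} would give as a black box.

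First I would fix $\gamma$ and rewrite the equation $a+b+\gamma = c\cdot d$ as $a = c\cdot d - (b+\gamma)$. For each fixed $(d,b)\in D\times B$, the solution locus $\ell_{d,b} := \{(c,a)\in Q^2 : a = c\cdot d - (b+\gamma)\}$ is precisely one of the affine lines of $\Pi$, namely $\{(x,y) : y = x\cdot d + k\}$ with $k = -(b+\gamma)$, and distinct pairs $(d,b)$ give distinct lines. Thus, with $P := C\times A$ (a set of $|C||A|$ affine points) and $L := \{\ell_{d,b} : (d,b)\in D\times B\}$ (a set of $|D||B|$ affine lines), one has
\[
N_\gamma(A,B,C,D) = |\{(p,\ell)\in P\times L : p\in\ell\}|.
\]
Replacing each affine line by its closure in $\Pi$ and viewing the affine points as points of $\Pi$ does not change this count, since no affine point is a point at infinity.

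Then I would use the spectral structure of the bipartite incidence graph $G$ of $\Pi$: both sides have $N := q^2+q+1$ vertices, $G$ is $(q+1)$-regular, and since any two points lie on a unique common line the incidence matrix $M$ satisfies $MM^{T} = qI+J$, so the singular values of $M$ are $q+1$ (once) and $\sqrt q$ (multiplicity $N-1$), and the eigenvalues of $G$ are $\pm(q+1)$ and $\pm\sqrt q$. The bipartite expander mixing lemma then gives, for any point set $P$ and line set $L$,
\[
\left|\, |\{(p,\ell)\in P\times L : p\in\ell\}| - \frac{(q+1)|P||L|}{N} \,\right| \le \sqrt q\,\sqrt{|P||L|\Bigl(1-\tfrac{|P|}{N}\Bigr)\Bigl(1-\tfrac{|L|}{N}\Bigr)} \le \sqrt q\,\sqrt{|P||L|},
\]
and substituting $|P| = |C||A|$, $|L| = |D||B|$, $N = q^2+q+1$ yields the claim.

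The main obstacle will be the translation step, since this is the only place where the failure of associativity and of two-sided distributivity in a quasifield matters. One must check, from the quasifield axioms alone --- in particular that $x\mapsto x\cdot d$ and $x\mapsto m\cdot x$ are bijections for nonzero $d,m$, one-sided distributivity, and the behavior at $d=0$ --- that for fixed $(d,b)$ the solution set is \emph{exactly} a line of $\Pi$ and that $(d,b)\mapsto\ell_{d,b}$ is injective, using the multiplication on whichever side ($x\cdot d$ versus $d\cdot x$) is consistent with the coordinatization convention for $\Pi$. The spectral computation and the expander mixing lemma are routine (and the former already underlies Theorem \ref{main th2}); once the bookkeeping of the translation step is done, the argument is the projective-plane analogue of the finite-field proof of Gyarmati and S\'ark\"ozy \cite{gs}.
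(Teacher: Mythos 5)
Your strategy coincides with the paper's: encode $a+b+\gamma=c\cdot d$ as point--line incidences in the projective plane $\Pi$ coordinatized by $Q$, and apply the two-sided mixing bound of Lemma \ref{beml} (whose main term is $\frac{(q+1)|S||T|}{q^2+q+1}$, coming from the eigenvalues $\pm(q+1)$, $\pm q^{1/2}$ of $\mathcal{G}(\Pi)$) rather than the one-sided Theorem \ref{main th2}. The spectral input and the mixing lemma are exactly Lemmas \ref{l1} and \ref{bipartite expander mixing lemma}.

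The one step that fails as written is the translation, and it fails for precisely the reason you flag at the end. In the plane coordinatized by a \emph{left} quasifield, the affine lines are $\{(x,y): m\cdot x + y = k\}$, with the slope on the \emph{left} of the running coordinate. Your set $\ell_{d,b}=\{(c,a): a = c\cdot d - (b+\gamma)\}$ has the fixed element $d$ on the \emph{right} of the running coordinate $c$; since $\cdot$ is not assumed commutative and $Q$ need not satisfy the right distributive law, this set is in general not a line of $\Pi$ (it is a line of the plane over the opposite quasifield), so the incidence graph of $\Pi$ does not see it. The repair is to transpose your roles: take $P=\{(d,-a): d\in D,\ a\in A\}$ and $L=\{[c,\,b+\gamma]: c\in C,\ b\in B\}$, so that the incidence relation of $\Pi$ reads $c\cdot d + (-a) = b+\gamma$, which holds if and only if $a+b+\gamma=c\cdot d$. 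Injectivity of $(d,a)\mapsto(d,-a)$ and of $(c,b)\mapsto[c,b+\gamma]$ is immediate (negation and translation by $\gamma$ are bijections of the group $(Q,+)$, and the $[m,k]$ are formal symbols), so $|P|=|A||D|$, $|L|=|B||C|$, and $N_\gamma(A,B,C,D)=e(P,L)$; Lemma \ref{beml} then gives the stated inequality. With that transposition your argument is exactly the paper's proof.
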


Theorem \ref{main th3} implies the following Corollary which generalizes Corollary 3.5 in \cite{vinh3}.

\begin{corollary}\label{main cor3}
If $Q$ is a finite quasifield with $q$ elements and $A, B , C , D \subset Q$ with $|A||B||C||D| > q^{3}$, then 
\[
Q  = A + B + C \cdot D.
\]
\end{corollary}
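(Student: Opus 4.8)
The plan is to deduce this directly from Theorem \ref{main th3} by a change of variables. Since the additive group of a finite quasifield is abelian, it suffices to show that for every $z \in Q$ the equation $a + b + c\cdot d = z$ has a solution with $a\in A$, $b\in B$, $c\in C$, $d\in D$. Using the distributive law available in $Q$ (from $c\cdot(d + (-d)) = c\cdot 0 = 0$ one gets $c\cdot(-d) = -(c\cdot d)$), the identity $a + b + c\cdot d = z$ is equivalent to $a + b + (-z) = c\cdot(-d)$. Writing $-D = \{-d : d \in D\}$, which has $|{-D}| = |D|$, this says precisely that $z \in A + B + C\cdot D$ if and only if $N_{-z}(A, B, C, -D) \ge 1$ in the notation of Theorem \ref{main th3}.

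Next I would apply Theorem \ref{main th3} with $\gamma = -z$ to the four sets $A$, $B$, $C$, $-D$. Since $|A||B||C||{-D}| = |A||B||C||D| > q^3$, the theorem gives
\[
N_{-z}(A, B, C, -D) \ \ge\ \frac{(q+1)\,|A||B||C||D|}{q^2 + q + 1} \ -\ q^{1/2}\sqrt{|A||B||C||D|}.
\]
It then remains to check that the right-hand side is positive, i.e. that $\dfrac{(q+1)\sqrt{|A||B||C||D|}}{q^2+q+1} > q^{1/2}$; this is an elementary computation using $|A||B||C||D| > q^3$. Hence $N_{-z}(A,B,C,-D) \ge 1$ for every $z \in Q$, which is exactly the assertion $Q = A + B + C\cdot D$.

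The argument is short, so there is no single genuinely hard step; the points requiring care are essentially bookkeeping. One must apply the distributive law on the correct side, so that it is $D$ (rather than $C$) that gets negated, according to whether $Q$ is taken to be left- or right-distributive; one must use that $(Q,+)$ is abelian so that the summands of $a + b + c\cdot d$ may be freely rearranged and $-z$ isolated; and one must verify the final inequality controlling the error term $q^{1/2}\sqrt{|A||B||C||D|}$ by the main term, where some attention is needed because the main-term constant is $\frac{q+1}{q^2+q+1}$ rather than $\frac1q$.
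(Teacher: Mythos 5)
Your argument is the same as the paper's: reduce $a+b+c\cdot d=z$ to $a+b+(-z)=c\cdot(-d)$ using $c\cdot(-d)=-(c\cdot d)$ (valid in a left quasifield), apply Theorem \ref{main th3} to $A,B,C,-D$ with $\gamma=-z$, and check that the resulting lower bound on the number of solutions is positive. The bookkeeping (negating $D$ rather than $C$, commutativity of $+$, injectivity of $d\mapsto -d$ so that $|-D|=|D|$) is all handled correctly and matches the paper's proof verbatim in substance.

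One caveat concerns exactly the step you flagged as delicate and then asserted goes through. Writing $N=|A||B||C||D|$, positivity of $\frac{(q+1)N}{q^2+q+1}-q^{1/2}\sqrt{N}$ is equivalent to
\[
N>\frac{q(q^2+q+1)^2}{(q+1)^2}=q^3+\frac{q(2q^2+2q+1)}{(q+1)^2},
\]
and the correction term is roughly $2q$, so the hypothesis $N>q^3$ does not by itself yield positivity. Concretely, for $q=5$ and $|A|=|B|=|C|=4$, $|D|=2$ one has $N=128>125=q^3$, yet $\frac{6\cdot 128}{31}-\sqrt{5}\cdot\sqrt{128}\approx -0.52<0$, so the theorem gives no solution in this range. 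The paper's own proof makes the identical unexamined claim that $N>q^3$ suffices, so you have not diverged from the source; but your statement that the inequality ``is an elementary computation using $|A||B||C||D|>q^3$'' is not quite true, precisely because, as you note, the main-term constant is $\frac{q+1}{q^2+q+1}<\frac1q$. To close the gap one needs a marginally stronger hypothesis such as $N\geq q^3+2q$, or a sharpening of the main term in Theorem \ref{main th3}.
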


We also prove a higher dimensional version of Theorem \ref{main th3}.  

\begin{theorem}\label{v2:theorem 1}
Let $d \geq 1$ be an integer.  
If $Q$ is a finite quasifield with $q$ elements and $A \subset Q$ with $|A| \geq 2q^{ \frac{d+2}{2d+2} }$, 
then
\[
Q  = A +A + \underbrace{A \cdot A + \dots + A \cdot A}_{d~\textup{terms}}.
\]
\end{theorem}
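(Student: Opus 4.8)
The plan is to reduce the statement to showing that every $\gamma \in Q$ can be written as $\gamma = a_1 + a_2 + \sum_{i=1}^{d} b_i c_i$ with all variables in $A$, and then to induct on $d$. The base case $d = 1$ is exactly Corollary~\ref{main cor3} with all four sets equal to $A$, since $|A|^4 > q^3$ is the same as $|A| > q^{3/4}$. The fact that powers the inductive step is left distributivity in $Q$: for $b \in Q$ and $c_1, \dots, c_k \in Q$ one has $\sum_{i=1}^{k} b \cdot c_i = b \cdot (c_1 + \dots + c_k)$, so, writing $kA := \underbrace{A + \dots + A}_{k}$,
\[
\underbrace{A \cdot A + \dots + A \cdot A}_{k} \;\supseteq\; A \cdot (kA) .
\]
Thus a block of $k$ product terms may be replaced, at the cost of passing to a subset, by the single product set $A \cdot (kA)$, which is exactly the shape that the one-product-term results proved above can handle.

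With this in hand I would proceed as follows. First, I would control the sizes of the partial sums $U_k := \bigl| A + \underbrace{A \cdot A + \dots + A \cdot A}_{k} \bigr|$. From $U_2 \ge |A \cdot (A+A)| \ge c\min\{q,\, |A|^3/q\}$ (Corollary~\ref{v2:cor 5}, together with $A\cdot A + A\cdot A \supseteq A\cdot(A+A)$) and the recursion $U_{k+1} \ge q - \dfrac{q^3}{U_k |A|^2 + q^2}$ (Theorem~\ref{v2:theorem 4}, taking the $k$-fold partial sum for the additive set and $B = C = A$), one sees that $U_k$ grows by a factor of order $|A|^2/q > 1$ per step until it reaches $(1-o(1))q$. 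Second, I would split the $d$ product terms into blocks, convert each block into a set $A \cdot (k_j A)$ by the inclusion above, and arrange the result as $A_1 + A_2 + A\cdot(k_r A)$ with $A_1 = A + (\text{block }1)$ and $A_2 = A + (\text{block }2)$ of size $(1-o(1))q$ and $A\cdot(k_r A)$ a product set whose second factor $k_r A$ also has large size; then $|A_1|\,|A_2|\,|A|\,|k_r A| > q^3$, because $|A|^2 > q^{1+1/(d+1)} > q$ forces the remaining factors to compensate, and Corollary~\ref{main cor3} completes the induction. If $A$ is additively structured — near a coset of a proper subgroup of $(Q,+)$, so that the sets $kA$ do not grow — the scheme stalls, and one instead invokes the sum--product estimate of Corollary~\ref{main cor} to deduce that $|A\cdot A|$ is large and iterates sums of $A\cdot A$ directly; note here that if $A$ itself lay in a proper coset then so would $A + A + \sum_{i=1}^d A\cdot A$, contradicting the conclusion as soon as $|A| > q^{1/2}$.

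The hard part will be the growth estimate for $U_k$, the structured case, and above all the bookkeeping that ties them to the exponent $\frac{d+2}{2d+2}$: one must show that the number of blocks consumed in making $A_1$ and $A_2$ ``almost all of $Q$'' is small enough to leave a product block $A\cdot(k_r A)$ that is wide enough, and that the dichotomy ``$A$ spreads additively'' versus ``$A\cdot A$ is large'' is effective and uniform over the whole range $q^{1/2} < |A| \le q$, in particular for $d$ large compared with $\log q$. Once the block sizes and thresholds are balanced, each individual step is a routine application of the results already established.
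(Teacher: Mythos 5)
Your proposal is a plan rather than a proof, and the places you yourself flag as ``the hard part'' are precisely where it fails to close. The quantitative engine of your induction is the recursion $U_{k+1} \ge q - q^3/(U_k|A|^2+q^2)$, which in the regime $U_k|A|^2 \ll q^2$ gives $U_{k+1} \gtrsim U_k|A|^2/q$, i.e.\ a growth factor of $|A|^2/q = q^{1/(d+1)}$ per product term consumed when $|A|$ sits at the threshold $q^{(d+2)/(2d+2)}$. Starting from $U_0\approx|A|=q^{(d+2)/(2d+2)}$ you need about $d/2$ product terms to push one block up to size $(1-o(1))q$; making \emph{both} $A_1$ and $A_2$ that large therefore consumes essentially all $d$ product terms and leaves no block $A\cdot(k_rA)$ for the final application of Corollary~\ref{main cor3}, while with only one large block the product $|A_1|\,|A_2|\,|A|\,|k_rA|$ comes out near $q\cdot q^{1/2}\cdot q^{1/2}\cdot q^{1/2}=q^{5/2}$, short of the required $q^3$ (and you have no growth estimate for the plain sumsets $k_rA$ to make up the difference). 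The ``structured case'' escape hatch is likewise unsupported: the paper provides no tool for showing that iterated sums of $A\cdot A$ exhaust $Q$, and Corollary~\ref{main cor} only lower-bounds $\max\{|A+A|,|A\cdot A|\}$, which does not restart the iteration. So the bookkeeping that would tie the argument to the exponent $\frac{d+2}{2d+2}$ is not merely unfinished; as sketched it does not balance.

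The paper avoids all of this with a single spectral step. It introduces the $q^d$-regular bipartite graph $\mathcal{DP}_Q(d,\gamma)$ on two copies of $Q^{d+1}$, where $(x_1,\dots,x_{d+1})_X$ is adjacent to $(y_1,\dots,y_{d+1})_Y$ iff $x_{d+1}+\gamma = x_1\cdot y_1+\dots+x_d\cdot y_d+y_{d+1}$, and shows (Lemma~\ref{v2:l2 for theorem}) that all nontrivial eigenvalues have absolute value at most $q^{d/2}$. Taking $S=A\times\cdots\times A\times(-A)$ and $T=A^{d+1}$, each of size $|A|^{d+1}$, the expander mixing lemma gives $e(S,T)>0$ as soon as $q^d|S||T|/q^{d+1} > q^{d/2}\sqrt{|S||T|}$, i.e.\ $|A|^{d+1}>q^{(d+2)/2}$, which is exactly $|A|>q^{\frac{d+2}{2d+2}}$; an edge between $S$ and $T$ is a representation $\gamma = a_{d+1}+a_{d+1}'+\sum_{i=1}^d a_i\cdot a_i'$. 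This is where the exponent comes from --- one graph, one eigenvalue bound, no iteration, no dichotomy, and no blocks. Your observation that left distributivity gives $A\cdot A+\dots+A\cdot A\supseteq A\cdot(kA)$ is correct and could be a useful remark elsewhere, but it is not the right lever here.
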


Another problem considered by S\'{a}rk\"{o}zy was the solvability of the equation $ab + 1 = cd$ over $\mathbb{Z}_p$.   S\'{a}rk\"{o}zy \cite{sar2} proved a result in $\mathbb{Z}_p$ which was later generalized to the finite field setting in 
\cite{gs}.

\begin{theorem}[Gyarmati,  S\'{a}rk\"{o}zy]\label{gs:th2}
Let $q$ be a power of a prime and $A , B , C , D \subset \mathbb{F}_q$.  If $N(A , B , C , D)$ is the number of solutions to $ab + 1 = cd$ with $a \in A$, $b \in B$, $c \in C$, and $d \in D$, then 
\[
\left| N (A,B,C,D) - \frac{ |A| |B| |C| |D| }{q} \right| \leq 8 q^{1/2} ( |A| |B| |C| |D| )^{1/2}  + 4q^2.
\]
\end{theorem}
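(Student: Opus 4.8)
The plan is to recognize $N(A,B,C,D)$ as a point--line incidence count in the affine plane $\mathbb{F}_q^2$ and then to invoke Theorem~\ref{vinh th2}. I would introduce the point set $P = A \times C \subseteq \mathbb{F}_q^2$ and, for each pair $(b,d) \in B \times D$, the line
\[
\ell_{b,d} = \{ (x,y) \in \mathbb{F}_q^2 : bx - dy = -1 \}.
\]
Because $\mathbb{F}_q$ is commutative, a point $(a,c) \in P$ lies on $\ell_{b,d}$ precisely when $ba - dc = -1$, i.e. when $ab + 1 = cd$. Hence, writing $L = \{ \ell_{b,d} : (b,d) \in B \times D \}$ and letting $I(P,L)$ denote the number of incident pairs, we have $I(P,L) = N(A,B,C,D)$.

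Next I would check that the parametrization $(b,d) \mapsto \ell_{b,d}$ is injective, so that $|L| = |B||D|$ and no line is overcounted. The line $\ell_{b,d}$ has equation $bx - dy + 1 = 0$; since the constant coefficient is normalized to $1$, the coefficient triple $(b,-d,1)$ admits no nontrivial rescaling, so $(b,d)$ is recovered from $\ell_{b,d}$. The single exceptional pair $b = d = 0$ yields the empty locus $\{0 = -1\}$, which contributes nothing to $N$ and may be discarded; this alters $|L|$ by at most $1$. With $|P| = |A||C|$ and $|L| = |B||D|$, Theorem~\ref{vinh th2}, in the two--sided form furnished by its spectral proof,
\[
\left| I(P,L) - \frac{|P||L|}{q} \right| \le q^{1/2} \sqrt{|P||L|},
\]
gives at once
\[
\left| N(A,B,C,D) - \frac{|A||B||C||D|}{q} \right| \le q^{1/2} \sqrt{|A||B||C||D|}.
\]

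This conclusion is in fact sharper than the stated inequality: it carries leading constant $1$ and no additive term. The weaker constant $8$ and the term $4q^2$ in Theorem~\ref{gs:th2} are artifacts of the original character--sum argument of Gyarm\'ati and S\'{a}rk\"{o}zy, in which one fixes a nontrivial additive character $\psi$ and writes
\[
N - \frac{|A||B||C||D|}{q} = \frac{1}{q} \sum_{s \ne 0} \psi(s) \Big( \sum_{a,b} \psi(sab) \Big) \Big( \sum_{c,d} \psi(-scd) \Big),
\]
then estimates the inner sums. There the quadruples with $ab = 0$ or $cd = 0$ must be split off: the pairs with $ab = 0$ number at most $|A| + |B| \le 2q$, and likewise for $cd$, so together they contribute a term bounded by $4q^2$. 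I would therefore regard the incidence route as the clean proof and view the $4q^2$ as the cost of elementary bookkeeping of these degenerate terms.

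The only genuine obstacle is the lower bound on $N$: Theorem~\ref{vinh th2} is recorded as an upper bound, whereas the absolute value in Theorem~\ref{gs:th2} demands control in both directions. I would supply this from the eigenvalue (expander mixing) estimate $|\lambda| \le q^{1/2}$ underlying the proof of Theorem~\ref{vinh th2}, which yields the displayed two--sided bound verbatim; the same mechanism already produces the two--sided estimate stated for quasifields in Theorem~\ref{main th3}. If instead one insisted on remaining purely within the character--sum framework and reproducing the precise constants, the main work would be the careful isolation and $O(q^2)$ estimation of the degenerate quadruples, together with a Weil--type (rather than Cauchy--Schwarz) treatment of the complete sum $\sum_{s \ne 0} \psi(s) S_1(s) S_2(s)$, so as to capture the square--root cancellation in $s$ that a plain application of Cauchy--Schwarz over $s$ fails to provide.
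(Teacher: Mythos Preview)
The paper does not actually prove Theorem~\ref{gs:th2}; it is quoted from \cite{gs} as motivation. What the paper does prove is the quasifield analogue, Theorem~\ref{v2:theorem 2}, which for a finite field (where $|K|=q$) specializes to essentially the same bound you obtain. Your incidence argument is correct over $\mathbb{F}_q$: the map $(b,d)\mapsto \ell_{b,d}$ is injective because the constant coefficient is pinned to $1$, and the two-sided form of the incidence estimate is indeed available from Lemma~\ref{beml}. (There is a harmless discrepancy between $|P||L|/q$ and $(q+1)|P||L|/(q^2+q+1)$, of size at most $q$ and absorbed by the stated error term.)

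Where your route and the paper's genuinely diverge is in the proof of Theorem~\ref{v2:theorem 2}. Rather than reducing to point--line incidences in the plane, the paper constructs the product graph $\mathcal{DP}_Q(2,0)$ on $Q^3\times Q^3$ with adjacency $x_3 = x_1\cdot y_1 + x_2\cdot y_2 + y_3$, shows its nontrivial eigenvalues are at most $q$ in absolute value, and then embeds the equation $a\cdot b + c\cdot d = \gamma$ by taking $S = A\times C\times\{0\}$ on one side and, on the other, thickening each $(b,d)$ into the set $L_\gamma(b,d)=\{(b\cdot\lambda,\,d\cdot\lambda,\,-\gamma\cdot\lambda):\lambda\in K^*\}$. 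That thickening by the kernel is precisely what replaces your division step: in a general quasifield you cannot rewrite $b\cdot x - d\cdot y = -1$ as $y = m\cdot x + k$, so your line parametrization does not survive the passage from $\mathbb{F}_q$ to $Q$. The price the paper pays is the factor $(|K|-1)^{-1/2}$ in the error, which vanishes exactly when $Q$ is a field. In short, your approach is the cleaner one over $\mathbb{F}_q$ and yields the sharper constant; the paper's product-graph argument is engineered to work without commutativity, associativity, or a right distributive law.
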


Our generalization to quasifields is as follows.

\begin{theorem}\label{v2:theorem 2}
Let $Q$ be a finite quasifield with $q$ elements and kernel $K$.  Let $\gamma \in Q \backslash \{0 \}$, and $A , B , C , D \subset Q$.  If $N_{ \gamma} (A,B,C,D)$ is the number of solutions to $a \cdot b + c \cdot d = \gamma$, then 
\[
\left| N_{ \gamma} (A,B,C,D) - \frac{ |A| |B| |C| |D| }{q} \right| \leq  q  \left( \frac{ |A| |B| |C| |D| }{ |K| - 1} \right)^{1/2}.
\]
\end{theorem}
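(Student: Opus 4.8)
The plan is to adapt the graph-theoretic machinery behind Theorems~\ref{vinh th2} and~\ref{main th2}, using the kernel to make the equation $a\cdot b+c\cdot d=\gamma$ linear over a genuine field. Recall that $K$ is a finite field, that $(Q,+)$ is a vector space over $K$ of some dimension $n$ with $q=|K|^{n}$, and that for each $a\in Q$ the left multiplication $L_{a}\colon v\mapsto a\cdot v$ is additive and $K$-linear (and bijective when $a\neq 0$). Fixing a $K$-basis identifies $Q$ with $K^{n}$ and each $L_{a}$ with a matrix in $\mathrm{GL}_{n}(K)\cup\{0\}$, and $a\cdot b+c\cdot d=\gamma$ becomes the $K$-linear equation $L_{a}b+L_{c}d=\gamma$ in the unknowns $b,d\in K^{n}$ (here one uses that $a\cdot(-)$ and $c\cdot(-)$ are additive; right distributivity may fail, so this linearization on the $b,d$ side is essential). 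I would then form the bipartite graph $G$ whose two vertex classes are both $Q^{2}\setminus\{(0,0)\}$, joining $(a,c)$ to $(b,d)$ precisely when $a\cdot b+c\cdot d=\gamma$. Since left multiplication by a nonzero element is a bijection and $\gamma\neq 0$, one checks directly that $G$ is $q$-regular on both sides, and moreover
\[
N_{\gamma}(A,B,C,D)=e_{G}\bigl((A\times C)\setminus\{(0,0)\},\,(B\times D)\setminus\{(0,0)\}\bigr).
\]

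Next I would invoke the bipartite expander mixing lemma for $G$: writing $M$ for the biadjacency matrix and $\sigma_{2}$ for its second largest singular value, one has $\bigl|e_{G}(S,T)-q|S||T|/(q^{2}-1)\bigr|\le\sigma_{2}\sqrt{|S||T|}$ for all $S,T$ in the two vertex classes. Taking $S=(A\times C)\setminus\{(0,0)\}$ and $T=(B\times D)\setminus\{(0,0)\}$, and absorbing the small discrepancy between $q|S||T|/(q^{2}-1)$ and $|A||B||C||D|/q$ into the error, it will suffice to prove
\[
\sigma_{2}\le\frac{q}{\sqrt{|K|-1}},\qquad\text{equivalently}\qquad \lambda_{2}(MM^{\mathsf T})\le\frac{q^{2}}{|K|-1}.
\]
To analyze $MM^{\mathsf T}$, note that its diagonal entries equal $q$, while for $(a,c)\neq(a',c')$ the entry $(MM^{\mathsf T})_{(a,c),(a',c')}$ counts the $(b,d)$ with $a\cdot b+c\cdot d=\gamma=a'\cdot b+c'\cdot d$; each solution set is a coset of the $K$-subspace $U_{a,c}=\{(b,d):L_{a}b+L_{c}d=0\}$ of $K$-dimension $n$ in $K^{2n}$, so this entry is either $0$ or equals $|U_{a,c}\cap U_{a',c'}|$, a power of $|K|$. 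Using the bijectivity of left multiplication and the cancellation laws in the loop $Q^{*}$ (this is where $\gamma\neq 0$ enters), one shows that $U_{a,c}=U_{a',c'}$ together with $(a,c)\neq(a',c')$ forces the two cosets to be disjoint, so that every off-diagonal entry of $MM^{\mathsf T}$ is at most $|K|^{n-1}=q/|K|$.

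The hard part will be the spectral estimate $\lambda_{2}(MM^{\mathsf T})\le q^{2}/(|K|-1)$ itself. When $Q=K$ the matrix $MM^{\mathsf T}$ has only two distinct eigenvalues, which one simply computes, recovering Theorem~\ref{gs:th2}; but for a proper quasifield $MM^{\mathsf T}$ has many intermediate eigenvalues, so the naive trace bound $\lambda_{2}^{2}\le\mathrm{tr}\bigl((MM^{\mathsf T})^{2}\bigr)-q^{4}$ is far too weak. The point is rather that the ``large'' off-diagonal entries of $MM^{\mathsf T}$ occur only in a rigid pattern governed by the $K^{*}$-scaling action on $Q^{2}\setminus\{(0,0)\}$, whose orbits have size $|K|-1$; exhibiting this symmetry and using it to peel off the dangerous part of the spectrum is the crux, and it is what produces the factor $|K|-1$ in the bound. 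A secondary technical nuisance, to be tracked throughout, is the possible failure of right distributivity, which is exactly what forces passage to the kernel and which complicates checking that the scaling maps interact correctly with the fixed element $\gamma$.
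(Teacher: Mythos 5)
Your setup is fine as far as it goes: the kernel does make each left multiplication $K$-linear, the bipartite ``solution graph'' $G$ on $Q^2\setminus\{(0,0)\}$ is indeed $q$-regular when $\gamma\neq 0$, and the expander mixing lemma would deliver the theorem if you had the singular value bound $\sigma_2\le q/\sqrt{|K|-1}$. But that bound is exactly the content of the theorem's error term, and your proposal does not prove it --- you explicitly defer it (``exhibiting this symmetry and using it to peel off the dangerous part of the spectrum is the crux''). Your structural observations about $MM^{\mathsf T}$ (diagonal $q$, off-diagonal entries that are $0$ or powers of $|K|$ at most $q/|K|$) do not by themselves control $\lambda_2(MM^{\mathsf T})$: for a proper quasifield the off-diagonal entries take many intermediate values in a pattern with no evident algebraic identity, and no argument is given for why the $K^*$-orbit structure forces the second eigenvalue down to $q^2/(|K|-1)$. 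Note also that this spectral statement is strictly stronger than the theorem (a discrepancy bound for all product sets $S\times T$ does not imply a singular value bound), so you are attempting to prove more than is needed, by a route whose feasibility is unclear. This is a genuine gap at the central step.

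The paper avoids the issue entirely by never analyzing the spectrum of $G$. It works inside the larger product graph $\mathcal{DP}_Q(2,0)$ on $Q^3\times Q^3$, whose nontrivial eigenvalues are bounded by $q$ via the explicit identity $M^3=q^2M+q(q^2-1)P$ (Lemma \ref{v2:l2 for theorem}). The factor $|K|-1$ then enters not through an improved spectral gap but through an amplification of the edge count: each pair $(b,d)$ is replaced by the $|K|-1$ vertices $(b\cdot\lambda,\,d\cdot\lambda,\,-\gamma\cdot\lambda)_Y$ with $\lambda\in K^*$, and the kernel axioms show that a solution $a\cdot b+c\cdot d=\gamma$ produces an edge from $(a,c,0)_X$ to every one of them, while $\gamma\neq 0$ makes these fibers pairwise disjoint. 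Thus $e(S,T)=N_\gamma(|K|-1)$ with $|T|=|B||D|(|K|-1)$; since the main term of the mixing lemma is linear in $|T|$ but the error term grows only like $\sqrt{|T|}$, dividing by $|K|-1$ yields the $\sqrt{|K|-1}$ saving. Your instinct that the $K^*$-scaling action is the source of the factor $|K|-1$ is correct, but the working mechanism is this counting amplification in a graph with a computable spectrum, not an eigenvalue-peeling argument for $MM^{\mathsf T}$. If you want to salvage your write-up, the shortest path is to replace the unproven spectral claim with this blow-up construction.
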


\begin{corollary}
Let $Q$ be a quasifield with $q$ elements whose kernel is $K$.  If $A , B  , C , D \subset Q$ and
$|A| |B| |C| |D| > q^4 ( |K| - 1)^{-1}$, then 
\[
Q \backslash \{ 0 \} \subset A \cdot B + C \cdot D.
\]
\end{corollary}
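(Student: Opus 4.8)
The plan is to derive this immediately from Theorem \ref{v2:theorem 2} by a counting argument: to show that an element $\gamma \in Q\backslash\{0\}$ lies in $A\cdot B + C\cdot D$ it suffices to show that $N_{\gamma}(A,B,C,D) \geq 1$, since any solution $(a,b,c,d)$ to $a\cdot b + c\cdot d = \gamma$ exhibits $\gamma$ as a member of $A\cdot B + C\cdot D$. So fix an arbitrary $\gamma \in Q\backslash\{0\}$; the work is to check that the error term in Theorem \ref{v2:theorem 2} is strictly smaller than the main term under the stated hypothesis.

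By the (reverse) triangle inequality, Theorem \ref{v2:theorem 2} gives
\[
N_{\gamma}(A,B,C,D) \;\geq\; \frac{|A||B||C||D|}{q} \;-\; q\left(\frac{|A||B||C||D|}{|K|-1}\right)^{1/2}.
\]
Writing $M = |A||B||C||D|$ for brevity, the right-hand side is strictly positive if and only if $\frac{M}{q} > q\left(\frac{M}{|K|-1}\right)^{1/2}$; dividing by $M^{1/2} > 0$ and rearranging, this is equivalent to $M^{1/2} > \frac{q^{2}}{(|K|-1)^{1/2}}$, i.e.\ to $M > \frac{q^{4}}{|K|-1}$, which is exactly the hypothesis $|A||B||C||D| > q^{4}(|K|-1)^{-1}$. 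Hence $N_{\gamma}(A,B,C,D) > 0$, so $N_{\gamma}(A,B,C,D) \geq 1$ (it is a nonnegative integer), and therefore $\gamma \in A\cdot B + C\cdot D$.

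Since $\gamma$ was an arbitrary element of $Q\backslash\{0\}$, we conclude $Q\backslash\{0\} \subset A\cdot B + C\cdot D$. A couple of minor points worth flagging in the write-up: the kernel $K$ of a quasifield contains at least $\{0,1\}$, so $|K| - 1 \geq 1$ and all the expressions above are well defined; and the restriction to $Q\backslash\{0\}$ in the conclusion (rather than all of $Q$) is forced by the hypothesis $\gamma \in Q\backslash\{0\}$ in Theorem \ref{v2:theorem 2}. I do not anticipate any real obstacle here — the entire content sits in Theorem \ref{v2:theorem 2}, and this corollary is just the standard "positive main term beats error term" deduction.
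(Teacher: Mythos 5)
Your proposal is correct and is exactly the intended deduction: the paper states this corollary without writing out a proof, but the argument is the same ``main term exceeds error term'' computation the authors carry out explicitly for Corollary \ref{main cor3}, and your algebra (positivity of $\frac{M}{q} - q(M/(|K|-1))^{1/2}$ being equivalent to $M > q^4(|K|-1)^{-1}$) is right. Your side remarks are also accurate: $\{0,1\}\subset K$ so $|K|-1\geq 1$, and the exclusion of $0$ from the conclusion is forced by the hypothesis $\gamma\neq 0$ in Theorem \ref{v2:theorem 2}.
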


By appropriately modifying the argument used to prove Theorem \ref{v2:theorem 2}, we can prove a higher dimensional version.  

\begin{theorem}\label{v2:theorem 3}
Let $Q$ be a finite quasifield with $q$ elements whose kernel is $K$.  
If $A \subset Q$ and $|A| > q^{ \frac{1}{2} + \frac{1}{d} } ( |K|  - 1)^{-1/2d}$, then 
\[ 
Q\backslash \{ 0 \} \subset \underbrace{A \cdot A + \dots + A \cdot A}_{d~\textup{terms}}.
\]
\end{theorem}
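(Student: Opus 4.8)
The plan is to deduce Theorem~\ref{v2:theorem 3} from Theorem~\ref{v2:theorem 2} by a bootstrapping argument on the number of product terms, exactly mirroring how Theorem~\ref{v2:theorem 1} follows from Theorem~\ref{main th3}. The idea is that once $A$ is large enough that $A\cdot A + A\cdot A$ already covers $Q\backslash\{0\}$, adding more product terms $A\cdot A$ can only help, so the content is to iterate the estimate of Theorem~\ref{v2:theorem 2} and keep track of how the size threshold on $|A|$ degrades with $d$.

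First I would dispose of the base case $d=1$: here the claim is $Q\backslash\{0\}\subset A\cdot A$, and the hypothesis reads $|A| > q^{3/2}(|K|-1)^{-1/2}$, i.e. $|A|^2(|K|-1) > q^3$. Applying Theorem~\ref{v2:theorem 2} with $B=C=D=A$ and any $\gamma\in Q\backslash\{0\}$ is not quite the right shape, so instead I would use the trivial observation that for $d=1$ the statement is essentially the $d=2$ statement specialized, or more cleanly, prove the result for $d\ge 2$ directly from Theorem~\ref{v2:theorem 2} and check $d=1$ separately. For $d=2$: fix $\gamma\in Q\backslash\{0\}$ and count solutions to $a_1\cdot a_2 + a_3\cdot a_4 = \gamma$ with all $a_i\in A$. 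By Theorem~\ref{v2:theorem 2} with $A=B=C=D$, this count $N_\gamma(A,A,A,A)$ satisfies
\[
\left| N_\gamma(A,A,A,A) - \frac{|A|^4}{q} \right| \le q\left(\frac{|A|^4}{|K|-1}\right)^{1/2} = \frac{q|A|^2}{(|K|-1)^{1/2}}.
\]
Hence $N_\gamma > 0$ as soon as $|A|^4/q > q|A|^2(|K|-1)^{-1/2}$, i.e. $|A|^2 > q^2(|K|-1)^{-1/2}$, i.e. $|A| > q(|K|-1)^{-1/4} = q^{1/2 + 1/2}(|K|-1)^{-1/4}$, which matches the stated threshold $q^{1/2+1/d}(|K|-1)^{-1/2d}$ at $d=2$.

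For the inductive step, suppose the result holds for some $d\ge 2$ with threshold $t_d = q^{1/2+1/d}(|K|-1)^{-1/2d}$, and let $|A| > t_{d+1}$. I would split the $(d+1)$-fold sumset as $\big(A\cdot A\big) + \big(\underbrace{A\cdot A + \cdots + A\cdot A}_{d}\big)$. The cleanest route is to fix $\gamma\in Q\backslash\{0\}$ and bound the number of representations $\gamma = x + y$ with $x\in A\cdot A$ and $y$ in the $d$-fold product sumset, using Theorem~\ref{v2:theorem 2} with a Cauchy--Schwarz / second-moment argument to control the number of ways each residue is hit — this is the standard trick by which one converts a single-step count into an iterated one, and it is precisely the point where the argument parallels the proof of Theorem~\ref{v2:theorem 1}. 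The main obstacle, and the only place requiring genuine care, is verifying that the inductive threshold arithmetic closes: one must check that the inequality governing $N_\gamma>0$ after $d+1$ steps is implied by $|A|>t_{d+1}$, given the inductive hypothesis at level $d$. Concretely this reduces to an elementary but slightly fiddly inequality comparing powers of $q$ and of $(|K|-1)$; I expect that once set up correctly the exponents $\frac12+\frac1{d}$ telescope to $\frac12+\frac1{d+1}$ exactly as in the field case, with the kernel factor $(|K|-1)^{-1/2d}$ simply riding along. I would end by remarking that when $Q$ is a field, $K=Q$, $|K|-1 = q-1$, and the threshold becomes $q^{1/2+1/d}(q-1)^{-1/2d} \approx q^{1/2}$, recovering the known finite-field statement.
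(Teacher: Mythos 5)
There is a genuine gap in your inductive step, and it is not just ``fiddly arithmetic.'' The threshold $t_d = q^{\frac12+\frac1d}(|K|-1)^{-1/2d}$ is \emph{decreasing} in $d$, so the hypothesis $|A| > t_{d+1}$ does not imply $|A| > t_d$: the inductive hypothesis at level $d$ is simply unavailable for exactly the sets that the level-$(d+1)$ statement is meant to handle. The naive repair --- split off one copy of $A\cdot A$ and convolve the level-$d$ representation count against the level-$1$ count using the sup-norm error bound $|N_{x,d} - |A|^{2d}/q| \le q^{d/2}|A|^{d}(|K|-1)^{-1/2}$ --- only reproduces the level-$d$ threshold at level $d+1$ and gains nothing in the exponent. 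To actually pass from $\frac12+\frac1d$ to $\frac12+\frac1{d+1}$ by iteration one would need an $L^2$ (multiplicative energy) bound on the representation function of $A\cdot A$, and establishing that in a quasifield is essentially as hard as the theorem itself; the sentence ``I expect the exponents telescope'' is precisely the part that has not been proved. Your $d=2$ computation is correct, since that case \emph{is} Theorem \ref{v2:theorem 2} with $A=B=C=D$, but $d=1$ is also left unproved: specializing Theorem \ref{v2:theorem 2} by shrinking $C$ and $D$ only yields the weaker threshold $q^{2}(|K|-1)^{-1/2}$ rather than $q^{3/2}(|K|-1)^{-1/2}$.

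The paper avoids induction entirely. For each $d$ it runs the argument of Theorem \ref{v2:theorem 2} directly in the graph $\mathcal{DP}_Q(d,0)$, taking $S = \{(a_1',\dots,a_d',0)_X : a_i'\in A\}$ and $T = \bigcup_{(a_1,\dots,a_d)\in A^d} L_\gamma(a_1,\dots,a_d)$, where $L_\gamma(a_1,\dots,a_d) = \{(a_1\cdot\lambda,\dots,a_d\cdot\lambda,-\gamma\cdot\lambda)_Y : \lambda\in K^*\}$. Then $|S| = |A|^d$, $|T| = |A|^d(|K|-1)$ (the lines are pairwise disjoint for $\gamma\neq 0$, as in Claim 2), each solution of $a_1\cdot a_1' + \dots + a_d\cdot a_d' = \gamma$ contributes exactly $|K|-1$ edges between $S$ and $T$, and a single application of the expander mixing lemma with second eigenvalue $q^{d/2}$ (Lemmas \ref{bipartite expander mixing lemma} and \ref{v2:l2 for theorem}) gives positivity exactly when $|A|^{2d}(|K|-1)/q > q^{d/2}|A|^d(|K|-1)^{1/2}$, i.e.\ when $|A| > q^{\frac12+\frac1d}(|K|-1)^{-1/2d}$. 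Replacing your induction with this one-shot argument in the $d$-dimensional product graph is the correct fix.
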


If $Q$ is a finite field, then $|K| = q$, and the bounds of Theorems \ref{v2:theorem 2} and \ref{v2:theorem 3} match the bounds obtained by Hart and Iosevich in \cite{hi}. 

Finally, we note that our theorems are proved using spectral techniques. In the proofs, if the size of the set is small, the error term from spectral estimates will dominate. Therefore, the results presented are only nontrivial if the size of the set is large enough. Sum-product estimates for small sets have been given (for example in \cite{bkt, rudnev2, tao}). We also note that it is not hard to show that one may find a set $A$ in either a field, general ring, or quasifield, where both $|A+A|$ and $|A\cdot A|$ are of order $|A|^2$.

The rest of the paper is organized as follows.  In Section 2 we collect some preliminary results.  
Section 3 contains the proof of Theorem \ref{main th}, \ref{main th2}, and \ref{main th3}, as well 
as Corollary \ref{main cor}, \ref{v2:cor 5}, and \ref{main cor3}.  
Section 4 contains the proof of Theorem \ref{v2:theorem 4} and \ref{v2:theorem 1}.
Section 5 contains the proof of Theorem \ref{v2:theorem 2} and \ref{v2:theorem 3}.        


\section{Preliminaries}

We begin this section by giving some preliminary results on quasifields. Let $Q$ denote a finite quasifield. We use 1 to denote the identity in the loop $(Q^* , \cdot )$.  It is a consequence of the definition that $(Q , +)$ must be an abelian group.  
One also has $x \cdot 0 = 0$ and $x \cdot (-y) = - ( x \cdot y )$ for all $x,y \in Q$ (see \cite{hp}, Lemma 7.1).
For more on quasifields, see Chapter 9 of \cite{hp}.  A \emph{(right) quasifield} is required to satisfy the right distributive law instead of the left distributive law.  The \emph{kernel} $K$ of a quasifield $Q$ is the set of all elements $k \in Q$ that satisfy 
\begin{enumerate}
\item $(x + y ) \cdot k = x \cdot k + y \cdot k$ for all $x,y \in Q$, and 
\item $(x \cdot y) \cdot k = x \cdot ( y \cdot k )$ for all $x,y \in Q$.
\end{enumerate}
Note that $(K , +)$ is an abelian subgroup of $(Q , +)$ and $(K^* , \cdot )$ is a group.  
 
\begin{lemma}\label{v2:l0}
If $a \in Q$ and $\lambda \in K$, then $-( a \cdot \lambda) = ( -a) \cdot \lambda$.
\end{lemma}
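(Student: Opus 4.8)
The plan is to exploit the fact that $\lambda \in K$ distributes on the right, i.e. $(x+y)\cdot\lambda = x\cdot\lambda + y\cdot\lambda$ for all $x,y \in Q$, since this is precisely the first defining property of the kernel. I would start from the equation $a + (-a) = 0$ in the abelian group $(Q,+)$, and multiply on the right by $\lambda$. Using right distributivity through $\lambda$ we get $a\cdot\lambda + (-a)\cdot\lambda = 0\cdot\lambda$, and then I invoke the identity $0\cdot x = 0$ (axiom (2) in the definition of a quasifield, applied with $x = \lambda$) to conclude $a\cdot\lambda + (-a)\cdot\lambda = 0$.

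From $a\cdot\lambda + (-a)\cdot\lambda = 0$ it follows immediately, by uniqueness of additive inverses in the abelian group $(Q,+)$, that $(-a)\cdot\lambda = -(a\cdot\lambda)$, which is exactly the claim.

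The only subtlety worth flagging is that one must use the \emph{right} distributive law here — the generic left distributive law $a\cdot(b+c) = a\cdot b + a\cdot c$ would instead let one compute $\lambda\cdot(-a)$, not $(-a)\cdot\lambda$ — so it is essential that $\lambda$ lies in the kernel $K$ and not merely in $Q$. In fact the general quasifield identity $x\cdot(-y) = -(x\cdot y)$ quoted in the preliminaries handles one order of multiplication; this lemma is the companion statement for the other order, valid precisely because kernel elements restore the missing distributivity. No genuine obstacle is expected: the argument is a two-line manipulation once the right axioms are identified, and it should be recorded explicitly only because later proofs will need to move minus signs past kernel elements on both sides.
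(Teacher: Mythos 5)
Your proof is correct, and it takes a genuinely different (and leaner) route than the paper's. The paper first establishes $a \cdot (-1) = -a$ via left distributivity and $a \cdot 0 = 0$, then computes
\[
-(a\cdot\lambda) = a\cdot(-\lambda) = a\cdot((0-1)\cdot\lambda) = (a\cdot(0-1))\cdot\lambda = (-a)\cdot\lambda,
\]
which leans on the quasifield identity $x\cdot(-y) = -(x\cdot y)$ from Lemma 7.1 of Hughes--Piper \emph{and} on the second kernel axiom $(x\cdot y)\cdot k = x\cdot(y\cdot k)$. You instead apply the first kernel axiom (right distributivity over $\lambda$) directly to $a + (-a) = 0$, use $0\cdot\lambda = 0$, and read off the inverse; you never need the associativity property of the kernel nor the identity $x\cdot(-y)=-(x\cdot y)$. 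Your argument is shorter, uses strictly weaker hypotheses on $\lambda$ (only the distributive half of the kernel definition), and avoids the slightly delicate intermediate step $(0-1)\cdot\lambda = -\lambda$ that the paper passes through implicitly. Your closing remark about why the hypothesis $\lambda \in K$ is essential (left distributivity alone only controls $x\cdot(-y)$, not $(-x)\cdot y$) is exactly the right thing to flag.
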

\begin{proof}
First we show that $a \cdot (-1) = -a$.  Indeed, $a \cdot (1 + (-1) ) = a \cdot 0 = 0$ and so $a + a \cdot (-1) =0$.  We conclude that $-a  = a \cdot (-1)$.  If $\lambda \in K$, then 
\begin{eqnarray*}
- ( a \cdot \lambda ) & =&  a \cdot ( - \lambda ) = a \cdot ( 0 - \lambda ) = a \cdot ( (0 - 1) \cdot \lambda ) \\
& =& (a \cdot (0 - 1) ) \cdot \lambda = ( 0 + a \cdot (-1) ) \cdot \lambda  = ( -a ) \cdot \lambda.
\end{eqnarray*}
\end{proof}
 
For the rest of this section, we assume that $Q$ is a finite quasifield with $|Q| = q$.  
We can construct a projective plane $\Pi = ( \mathcal{P} , \mathcal{L} , \mathcal{I} )$ that is coordinatized by $Q$. Here $\mathcal{I}\subset \mathcal{P}\times \mathcal{L}$ is the set of {\em incidences} between points and lines. If $p\in \mathcal{P}$ and $l\in \mathcal{L}$, we write $p\mathcal{I} l$ to denote that $(p,l)\in \mathcal{I}$, ie that $p$ is incident with $l$. We will follow the notation of \cite{hp} and refer the reader to Chapter 5 of \cite{hp} for more details.  Let $\infty$ be a symbol not in $Q$.  The points of $\Pi$ are defined as
\[
\mathcal{P} = \{(x,y) : x,y \in Q \} \cup \{ (x) : x \in Q \} \cup \{ ( \infty) \}.
\]
The lines of $\Pi$ are defined as
\[
\mathcal{L} = \{ [m,k] : m,k \in Q \} \cup \{ [m] :  m \in Q \} \cup \{ [\infty] \}.
\]
The incidence relation $\incidence$ is defined according to the following rules:
\begin{enumerate}
\item $(x,y) \incidence [m,k]$ if and only if $m \cdot x + y = k$,
\item $(x,y) \incidence [k]$ if and only if $x = k$,
\item $(x) \incidence [m,k]$ if and only if $x = m$,
\item $(x) \incidence [ \infty ]$ for all $x \in Q$, $( \infty ) \incidence [k]$ for all $k \in Q$, and 
$( \infty ) \incidence [ \infty ]$.
\end{enumerate}
 
Since $|Q| = q$, the plane $\Pi$ has order $q$.  

Next we associate a graph to the plane $\Pi$.  Let $\mathcal{G} ( \Pi )$ be the bipartite graph with parts $\mathcal{P}$ and $\mathcal{L}$ where $p \in \mathcal{P}$ is adjacent to $l \in \mathcal{L}$ if and only if $p \incidence l$ in $\Pi$.  The first lemma is known 
(see \cite{bcn}, page 432).

\begin{lemma}\label{l1}
The graph $\mathcal{G}( \Pi )$ has eigenvalues $q + 1$ and $-(q+1)$, each with multiplicity one.  All other eigenvalues of $\mathcal{G}( \Pi )$ are $ \pm q^{1/2}$.
\end{lemma}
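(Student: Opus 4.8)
The plan is to work with the adjacency matrix $A$ of $\mathcal{G}(\Pi)$ and to exploit the fact, established above, that $\Pi$ is a projective plane of order $q$ on $q^{2}+q+1$ points and $q^{2}+q+1$ lines. Since $\mathcal{G}(\Pi)$ is bipartite with parts $\mathcal{P}$ and $\mathcal{L}$, ordering the vertices with all points before all lines gives
\[
A = \begin{pmatrix} 0 & N \\ N^{T} & 0 \end{pmatrix}, \qquad
A^{2} = \begin{pmatrix} NN^{T} & 0 \\ 0 & N^{T}N \end{pmatrix},
\]
where $N$ is the $(q^{2}+q+1)\times(q^{2}+q+1)$ point--line incidence matrix, with $N_{p,l}=1$ precisely when $p\incidence l$. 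So it is enough to determine the spectra of $NN^{T}$ and $N^{T}N$ and then take square roots.

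First I would compute $NN^{T}$: its $(p,p')$ entry is the number of lines incident with both $p$ and $p'$. Because $\Pi$ has order $q$, every point lies on exactly $q+1$ lines, so the diagonal entries are $q+1$; because two distinct points of a projective plane lie on a unique common line, the off-diagonal entries are $1$. Hence $NN^{T} = qI + J$, with $I$ and $J$ the identity and all-ones matrices of size $q^{2}+q+1$, and dually (every line has $q+1$ points, two distinct lines meet in exactly one point) $N^{T}N = qI + J$ as well. Now $J$ has eigenvalue $q^{2}+q+1$ with multiplicity $1$ (eigenvector $\mathbf 1$) and eigenvalue $0$ with multiplicity $q^{2}+q$, so $qI+J$ has eigenvalue $(q+1)^{2}$ with multiplicity $1$ and eigenvalue $q$ with multiplicity $q^{2}+q$. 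Therefore $A^{2}$ has eigenvalue $(q+1)^{2}$ with multiplicity $2$ and eigenvalue $q$ with multiplicity $2(q^{2}+q)$.

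Next I would pass back to $A$. If $Av=\lambda v$ then $A^{2}v=\lambda^{2}v$, so every eigenvalue $\lambda$ of $A$ satisfies $\lambda^{2}\in\{(q+1)^{2},q\}$, i.e. $\lambda\in\{\pm(q+1),\pm q^{1/2}\}$. The graph $\mathcal{G}(\Pi)$ is $(q+1)$-regular and connected --- any point and line can be joined by a path of length at most $3$ using the incidence axioms --- so by Perron--Frobenius $q+1$ is an eigenvalue of multiplicity exactly $1$; and since $\mathcal{G}(\Pi)$ is bipartite its spectrum is symmetric about $0$, so $-(q+1)$ likewise has multiplicity exactly $1$. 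These two eigenvalues of $A$ account for all of the multiplicity $2$ of $(q+1)^{2}$ for $A^{2}$, hence the remaining $2(q^{2}+q)$ eigenvalues of $A$ are each equal to $+q^{1/2}$ or $-q^{1/2}$, which is the assertion.

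The block identities and the spectrum of $qI+J$ are routine computations; the one step needing a little care is verifying that $q+1$ (and hence $-(q+1)$) is a simple eigenvalue, which is exactly where connectedness of $\mathcal{G}(\Pi)$ --- equivalently, non-degeneracy of the plane $\Pi$ --- is used. As a consistency check, the multiplicities sum to $2+2(q^{2}+q)=2(q^{2}+q+1)$, the number of vertices of $\mathcal{G}(\Pi)$.
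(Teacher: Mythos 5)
Your proof is correct, and it is the standard argument (via $NN^{T}=qI+J$ for the incidence matrix of a projective plane of order $q$, plus Perron--Frobenius and bipartite symmetry for the simplicity of $\pm(q+1)$) that the paper itself does not reproduce but merely cites from Brouwer--Cohen--Neumaier. All steps, including the connectedness of $\mathcal{G}(\Pi)$ and the multiplicity bookkeeping, check out.
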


The next lemma is a bipartite version of the well-known Expander Mixing Lemma.

\begin{lemma}[Bipartite Expander Mixing Lemma]\label{bipartite expander mixing lemma}
Let $G$ be a $d$-regular bipartite graph on $2n$ vertices with parts $X$ and $Y$.  Let $M$ be the adjacency matrix of $G$.
Let $d=\lambda_1\geq \lambda_2 \geq \cdots \geq \lambda_{2n} = -d$ be the eigenvalues of $M$ and define $\lambda = \max_{i\not=1,2n} |\lambda_i|$. Let $S\subset X$ and $T\subset Y$, and let $e(S,T)$ denote the number of edges with one endpoint in $S$ and the other in $T$. Then
\[
\left| e(S,T) - \frac{d|S||T|}{n}\right| \leq \lambda\sqrt{|S||T|}.
\]
\end{lemma}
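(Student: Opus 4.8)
The plan is to run the standard spectral argument, adapted to the bipartite block structure of $M$. Since $G$ is $d$-regular and bipartite, the two parts have the same size, so $|X|=|Y|=n$, and writing $M=\left(\begin{smallmatrix} 0 & N \\ N^{T} & 0\end{smallmatrix}\right)$ with respect to the partition $X\cup Y$ is convenient conceptually but not strictly needed. Fix an orthonormal eigenbasis $v_1,\dots,v_{2n}$ of $M$ with $Mv_i=\lambda_i v_i$, chosen so that the two extreme eigenvectors are the obvious ones: the all-ones vector $\mathbf{1}$ satisfies $M\mathbf{1}=d\mathbf{1}$ (regularity), so take $v_1=\mathbf{1}/\sqrt{2n}$; and since $M$ sends the indicator $\mathbf{1}_X$ of $X$ to $d\,\mathbf{1}_Y$ and $\mathbf{1}_Y$ to $d\,\mathbf{1}_X$, the vector $\mathbf{1}_X-\mathbf{1}_Y$ is an eigenvector with eigenvalue $-d$, so take $v_{2n}=(\mathbf{1}_X-\mathbf{1}_Y)/\sqrt{2n}$ (note $\langle\mathbf{1},\mathbf{1}_X-\mathbf{1}_Y\rangle=|X|-|Y|=0$, so these are orthogonal).

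Next I would express $e(S,T)$ spectrally. Viewing $S\subseteq X$ and $T\subseteq Y$ as subsets of the full vertex set, $e(S,T)=\mathbf{1}_S^{T}M\mathbf{1}_T$. Expanding $\mathbf{1}_S=\sum_i\alpha_i v_i$ and $\mathbf{1}_T=\sum_i\beta_i v_i$ gives $e(S,T)=\sum_i\lambda_i\alpha_i\beta_i$. Because $\mathbf{1}_S$ is supported on $X$ and $\mathbf{1}_T$ on $Y$, a one-line computation with the explicit forms of $v_1,v_{2n}$ gives $\alpha_1=|S|/\sqrt{2n}$, $\alpha_{2n}=|S|/\sqrt{2n}$, $\beta_1=|T|/\sqrt{2n}$, and $\beta_{2n}=-|T|/\sqrt{2n}$. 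The point to watch is the sign in $\beta_{2n}$: the $i=1$ and $i=2n$ terms of $\sum_i\lambda_i\alpha_i\beta_i$ are $d\cdot\frac{|S|}{\sqrt{2n}}\cdot\frac{|T|}{\sqrt{2n}}$ and $(-d)\cdot\frac{|S|}{\sqrt{2n}}\cdot\bigl(-\frac{|T|}{\sqrt{2n}}\bigr)$, which are \emph{equal}, so together they contribute exactly $\frac{d|S||T|}{n}$, precisely the main term in the statement.

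Finally I would bound the tail. Subtracting the two extreme terms, $e(S,T)-\frac{d|S||T|}{n}=\sum_{i\neq 1,2n}\lambda_i\alpha_i\beta_i$, and hence by the definition of $\lambda$, the triangle inequality, Cauchy--Schwarz, and Parseval's identity,
\[
\left| e(S,T)-\frac{d|S||T|}{n}\right|\;\le\;\lambda\sum_{i\neq 1,2n}|\alpha_i||\beta_i|\;\le\;\lambda\Bigl(\sum_i\alpha_i^2\Bigr)^{1/2}\Bigl(\sum_i\beta_i^2\Bigr)^{1/2}=\lambda\,\|\mathbf{1}_S\|_2\,\|\mathbf{1}_T\|_2=\lambda\sqrt{|S||T|},
\]
since $\sum_i\alpha_i^2=\|\mathbf{1}_S\|_2^2=|S|$ and $\sum_i\beta_i^2=\|\mathbf{1}_T\|_2^2=|T|$. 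This is the desired inequality.

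There is no genuinely hard step here; the whole argument is routine linear algebra. The only subtlety is the bookkeeping around the two trivial eigenvalues $\pm d$: one must pick the eigenbasis so that $v_1$ and $v_{2n}$ are the explicit global and ``alternating'' vectors, and then verify that the $-d$ term reinforces rather than cancels the $+d$ term, because $\mathbf{1}_T$ carries a negative coefficient on $v_{2n}$. (Everything goes through even if $\pm d$ have higher multiplicity, since then $\lambda=d$ and the bound becomes trivially true; but in the intended application $\mathcal{G}(\Pi)$ is connected and these multiplicities are one, as recorded in Lemma \ref{l1}.)
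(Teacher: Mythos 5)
Your proof is correct and is essentially the same argument as the paper's: both write $e(S,T)=\chi_S^T M\chi_T$, expand in an orthonormal eigenbasis with the two extreme eigenvectors taken to be $(\chi_X+\chi_Y)/\sqrt{2n}$ and $(\chi_X-\chi_Y)/\sqrt{2n}$, observe that the $+d$ and $-d$ terms each contribute $\frac{d|S||T|}{2n}$ so that together they give the main term, and bound the remaining sum by Cauchy--Schwarz and Parseval. The sign bookkeeping you flag is exactly the point the paper's computation makes as well.
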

\begin{proof}
Assume that the columns of $M$ have been been ordered so that the columns corresponding to the vertices of $X$ come before the columns corresponding to the vertices of $Y$.
For a subset $B\subset V(G)$, let $\chi_B$ be the characteristic vector for $B$. Let $\{x_1, \dots ,x_{2n}\}$ be an orthonormal set of eigenvectors for $M$. Note that since $G$ is a $d$-regular bipartite graph, we have
\begin{align}
& x_1 = \frac{1}{\sqrt{2n}}\left( \chi_X + \chi_Y\right), \label{first eigenvector}\\ 
& x_{2n} = \frac{1}{\sqrt{2n}} \left(\chi_X - \chi_Y\right) \label{last eigenvector}.
\end{align}
Now $\chi_S^T M \chi_T = e(S,T)$. Expanding $\chi_S$ and $\chi_T$ as linear combinations of eigenvectors yields
\begin{align*}
 e(S,T) = \left(\sum_{i=1}^{2n} \langle \chi_S, x_i\rangle x_i \right)^T M \left( \sum_{i=1}^{2n} \langle \chi_T, x_i\rangle x_i \right) 
 = \sum_{i=1}^{2n} \langle \chi_S, x_i\rangle \langle \chi_T, x_i\rangle \lambda_i.
\end{align*}
Now by \eqref{first eigenvector} and  \eqref{last eigenvector},
$\langle \chi_S, x_1\rangle = \langle \chi_S, x_{2n}\rangle  = \frac{1}{\sqrt{2n}} |S|$ and $\langle \chi_T, x_1\rangle = - \langle \chi_T, x_{2n}\rangle = \frac{1}{\sqrt{2n}} |T|$. Since $\lambda_1 = -\lambda_{2n} = d$, we have 
\begin{align*}
\left| e(S,T)-\frac{2d|S||T|}{2n}\right| &= \left|\sum_{i=2}^{2n-1} \langle \chi_S, x_i\rangle \langle \chi_T, x_i\rangle \lambda_i \right|\\ &\leq  \lambda \sum_{i=2}^{2n-1} \left| \langle \chi_S, x_i \rangle \langle \chi_T, x_i\rangle \right|\\
& \leq \lambda \left(\sum_{i=2}^{2n-1} \langle \chi_S, x_i\rangle^2\right)^{1/2} \left(\sum_{i=2}^{2n-1} \langle \chi_T, x_i\rangle^2 \right)^{1/2} & \mbox{(by Cauchy-Schwarz).}
\end{align*}
Finally by the Pythagorean Theorem,
\[
\sum_{i=2}^{2n-1} \langle \chi_S, x_i\rangle^2 = |S| - \frac{2|S|^2}{2n} < |S|
\]
and 
\[
\sum_{i=2}^{2n-1} \langle \chi_T, x_i\rangle^2 = |T| - \frac{2|T|^2}{2n} < |T|.
\]
\end{proof}

\bigskip

Combining Lemmas \ref{l1} and \ref{bipartite expander mixing lemma} gives the next lemma.

\begin{lemma}\label{beml}
For any $S \subset \mathcal{P}$ and $T \subset \mathcal{L}$, 
\begin{equation*}
\left| e(S , T) - \frac{ (q + 1) |S| |T| }{ q^2 + q + 1} \right| \leq  q^{1/2}\sqrt{|S||T|} 
\end{equation*}
where $e(S,T)$ is the number of edges in $ \mathcal{G} ( \Pi )$ with one endpoint in $S$ and the other in $T$.
\end{lemma}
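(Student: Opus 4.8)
The plan is to observe that Lemma \ref{beml} is an immediate specialization of Lemma \ref{bipartite expander mixing lemma} once the spectral and combinatorial parameters of $\mathcal{G}(\Pi)$ are identified, so the task is really just to check hypotheses. First I would recall the standard counting facts for the projective plane $\Pi = (\points,\lines,\incidence)$ of order $q$ coordinatized by $Q$: from the description of $\points$ we have $|\points| = q^2 + q + 1$ (that is, $q^2$ points $(x,y)$, together with $q$ points $(x)$ and the point $(\infty)$), and dually $|\lines| = q^2 + q + 1$; moreover every point of $\Pi$ is incident with exactly $q+1$ lines and every line with exactly $q+1$ points. Hence $\mathcal{G}(\Pi)$ is a $(q+1)$-regular bipartite graph on $2n$ vertices with parts $\points$ and $\lines$, where $n = q^2 + q + 1$, exactly the setting of Lemma \ref{bipartite expander mixing lemma}.

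Next I would invoke Lemma \ref{l1}: the adjacency matrix of $\mathcal{G}(\Pi)$ has eigenvalues $q+1$ and $-(q+1)$, each of multiplicity one, while every remaining eigenvalue has absolute value $q^{1/2}$. Thus, in the notation of Lemma \ref{bipartite expander mixing lemma}, we have $d = q+1$ and $\lambda = \max_{i \neq 1, 2n} |\lambda_i| = q^{1/2}$. Substituting $d = q+1$, $n = q^2+q+1$, and $\lambda = q^{1/2}$ into the conclusion of Lemma \ref{bipartite expander mixing lemma}, and taking $X = \points$, $Y = \lines$ with arbitrary $S \subset \points$ and $T \subset \lines$, yields
\[
\left| e(S,T) - \frac{(q+1)|S||T|}{q^2+q+1} \right| \leq q^{1/2}\sqrt{|S||T|},
\]
which is precisely the assertion of Lemma \ref{beml}.

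There is essentially no genuine obstacle in this argument; it is a bookkeeping step. The only points that merit a moment's care are that Lemma \ref{l1} does describe the entire spectrum of $\mathcal{G}(\Pi)$ (so that the relevant $\lambda$ is exactly $q^{1/2}$ and nothing larger), and that $|\points| = |\lines|$ so that $\mathcal{G}(\Pi)$ really is a balanced bipartite graph on $2n$ vertices as Lemma \ref{bipartite expander mixing lemma} requires — both of which are standard facts about finite projective planes and are compatible with the statement of Lemma \ref{l1}.
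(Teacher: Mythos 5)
Your proposal is correct and matches the paper exactly: the paper derives Lemma \ref{beml} by simply combining Lemma \ref{l1} with Lemma \ref{bipartite expander mixing lemma}, using $d = q+1$, $n = q^2+q+1$, and $\lambda = q^{1/2}$ just as you do. Your write-up merely makes explicit the parameter identifications that the paper leaves implicit.
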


We now state precisely what we mean by a line in $Q^2$. 

\begin{definition}
Given $a,b \in Q$, a line in 
$Q^2$ is a set of the form 
\[
 \{ ( x , y ) \in Q^2 : y = b \cdot x + a \} ~~ \mbox{or} ~~ \{ (a,y) : y \in Q \}.
\]
\end{definition}

When multiplication is commutative, $b \cdot x + a = x \cdot b  + a$.  In general, the binary operation $\cdot$ need not be commutative and so we write our lines with the slope on the left.  

The next lemma is due to Elekes \cite{elekes} (see also \cite{tv}, page 315).  In working in a (left) quasifield, which is not required to satisfy the right distributive law, some care must be taken with algebraic manipulations.    

\begin{lemma}\label{l3}
Let $A \subset Q^*$.  There is a set $P$ of $|A +  A| |A \cdot A |$ points and a set 
$L$ of $|A|^2$ lines in $Q^2$ such that there are at least $|A|^3$ incidences between $P$ and $L$.
\end{lemma}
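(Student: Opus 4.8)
The plan is to adapt Elekes' classical construction of a point set and line set with many incidences, being careful about the one-sided distributive law. Given $A \subset Q^*$, I would define the line set $L$ to consist of the lines
\[
\ell_{a,b} = \{ (x,y) \in Q^2 : y = a \cdot x + (-(a \cdot b)) \}
\]
for $a, b \in A$. Since $(Q^*, \cdot)$ is a loop and $a \neq 0$, distinct pairs $(a,b)$ give distinct lines: the slope $a$ is read off from the line, and then $b$ is recovered from the intercept via the unique solvability of $a \cdot b = c$ in the loop. Hence $|L| = |A|^2$. For the point set I would take
\[
P = (A + A) \times (A \cdot A) = \{ (s, p) : s \in A + A,\ p \in A \cdot A \},
\]
so that $|P| = |A+A|\,|A \cdot A|$.

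The key step is to exhibit $|A|^3$ incidences. For each triple $(a,b,c) \in A \times A \times A$, consider the point $(b + c,\ a \cdot c) \in P$ (note $b + c \in A + A$ and $a \cdot c \in A \cdot A$) and the line $\ell_{a,b}$. I claim this point lies on this line. Using the left distributive law, $a \cdot (b + c) = a \cdot b + a \cdot c$, and therefore
\[
a \cdot (b+c) + (-(a \cdot b)) = a \cdot b + a \cdot c + (-(a \cdot b)) = a \cdot c,
\]
using that $(Q,+)$ is an abelian group to cancel $a \cdot b$ with its inverse. So the point $(b+c, a \cdot c)$ is indeed incident to $\ell_{a,b}$, and this gives an incidence for every one of the $|A|^3$ triples $(a,b,c)$.

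The remaining point is to check that distinct triples $(a,b,c)$ yield distinct incidence pairs $(\text{point},\text{line})$, so that we really get $|A|^3$ incidences and not fewer. Given the pair $(\ell_{a,b},\, (b+c, a\cdot c))$, the line determines $a$ (as its slope) and $b$ (from the intercept, as above), and then $c$ is determined because $x \mapsto b + x$ is injective on $(Q,+)$ — indeed $c = (b+c) - b$ recovers $c$ from the first coordinate of the point. So the map $(a,b,c) \mapsto (\ell_{a,b}, (b+c,a\cdot c))$ is injective into $L \times P$, yielding at least $|A|^3$ incidences between $P$ and $L$.

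I expect the main subtlety — rather than an obstacle — to be precisely the bookkeeping forced by the absence of the right distributive law: one must place the slope on the left throughout, invoke only the left distributive identity $a\cdot(b+c) = a\cdot b + a\cdot c$, and use Lemma \ref{v2:l0} / the identity $a \cdot (-1) = -a$ (established in the proof of Lemma \ref{v2:l0}) to justify writing the intercept as $-(a\cdot b)$ and manipulating additive inverses. No genuinely hard estimate is involved; the content is entirely in verifying that the loop axioms and the one-sided distributivity suffice to make Elekes' construction go through verbatim.
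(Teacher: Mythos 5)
Your construction is exactly the paper's (Elekes' point set $(A+A)\times(A\cdot A)$ and lines $y = a\cdot x - a\cdot b$, with only the letters permuted), and the verification of the $|A|^3$ incidences via left distributivity is the same. The only cosmetic difference is that you recover the pair $(a,b)$ from a line using unique solvability in the loop $(Q^*,\cdot)$, whereas the paper evaluates at $x=0$ and $x=1$ and then uses $b\cdot(a-c)=0$; both are correct, so the proposal is sound and essentially identical to the paper's proof.
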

\begin{proof}
Let $P = (A + A ) \times ( A \cdot A)$ and 
\[
l(a , b) = \{ (x,y) \in Q^2 : y = b \cdot x - b \cdot a \}.
\]
Let $L = \{ l(a,b) : a , b \in A \}$.  The statement that $|P| = |A + A| | A \cdot A|$ is clear from the definition of $P$.  
Suppose $l(a,b)$ and $l(c,d)$ are elements of $L$ and $l(a,b)  = l(c,d)$.  
We claim that $(a,b)  = (c,d)$.  In a quasifield, one has $x \cdot 0 = 0$ for every $x$, and $x \cdot (-y) = - (x \cdot y)$ for every $x$ and $y$ (\cite{hp}, Lemma 7.1).  
The line $l(a,b)$ contains the points $(0, -b \cdot a)$ and 
$(1 , b - b \cdot a)$.  Furthermore, these are the unique points in $l(a,b)$ with first coordinate 0 and 1, respectively.  Similarly, 
the line $l(c,d)$ contains the points $(0 , - d \cdot c)$ and $(1 , d - d \cdot c)$.  Since $l(a,b) = l(c,d)$, we must have that 
$-b \cdot a = -d \cdot c$ and $b - b \cdot a = d - d \cdot c$.  Thus, $b = d$ and so $b \cdot a = b \cdot c$.  We can rewrite this equation as $b \cdot a - b \cdot c = 0$.  Since $- x \cdot y = x \cdot (-y)$ and $Q$ satisfies the left distributive law, we have $b  \cdot (a - c) = 0$.  If $a = c$, then $(a,b) = (c,d)$ and we are done.  Assume that $a \neq c$ so that $a - c \neq 0$.  Then we must have $b = 0$ for if $b \neq 0$, then the product $b \cdot (a - c)$ would be contained in $Q^*$ as multiplication is a binary operation on $Q^*$.  Since $A \subset Q^*$, we have $b \neq 0$.  It must be the case that $a = c$.  We conclude that each pair $(a,b) \in A^2$ determines a unique line in $L$ and so $|L| = |A|^2$.  

Consider a triple $(a,b,c) \in A^3$.  The point $(a+c , b \cdot c)$ belongs to $P$ and is incident to $l(a,b) \in L$ since 
\[
b \cdot (a + c) - b \cdot a = b \cdot a + b \cdot c - b \cdot a = b \cdot c.
\]
Each triple in $A^3$ generates an incidence and so there are at least $|A|^3$ incidences between $P$ and $L$.  
\end{proof}


\section{Proof of Theorem \ref{main th}, \ref{main th2}, and \ref{main th3}}

Throughout this section, $Q$ is a finite quasifield with $q$ elements,  $\Pi = ( \mathcal{P} , \mathcal{L} , \mathcal{I} )$ 
is the the projective plane coordinatized by $Q$ as in Section 2.  The graph $ \mathcal{G} ( \Pi )$ is the bipartite graph 
defined before Lemma \ref{l1} in Section 2.   

\bigskip

\begin{proof}[Proof of Theorem \ref{main th2}]
Let $P \subset Q^2$ be a set of points and view $P$ as a subset of $\mathcal{P}$.  Let $r(a,b) = \{ (x,y) \in Q^2 : y = b \cdot x + a \}$, $R \subset Q^2$, and let
\[
L = \{ r(a,b) : (a,b) \in R \}
\]
be a collection of lines in $Q^2$.  The point $p = (p_1 , p_2)$ in $P$ is incident to the line $r(a,b)$ in $L$ if and only if $p_2 = b \cdot p_1 + a$.  This however is equivalent to $(p_1 , - p_2) \mathcal{I} [ b , -a ]$ in $\Pi$.  If
$S = \{ (p_1 , - p_2) : (p_1 , p_2 ) \in P \}$ and $T = \{ [ b , -a ] : ( a,b) \in R \}$, then 
\[
| \{ (p , l ) \in P \times L : p \in l \} | = e ( S , T)
\]
where $e(S,T)$ is the number of edges in $ \mathcal{G}( \Pi )$ with one endpoint in $S$ and the other in $T$.  
By Lemma \ref{beml},  
\[
| \{ (p , l ) \in P \times L : p \in l \} | \leq \frac{ |S| |T| }{q} + q^{1/2}\sqrt{|S||T|}
\]
which proves Theorem \ref{main th2}.  
\end{proof}

\bigskip

\begin{proof}[Proof of Theorem \ref{main th} and Corollary \ref{main cor}]
Let $A \subset Q^*$.  Let $S = (A + A ) \times (A \cdot A)$.  We view $S$ as a subset of $\mathcal{P}$.  Let $s(a,b) = \{ (x , y) \in Q^2 : y = b \cdot  x- b \cdot a \}$ and 
\[
L = \{ s (a,b) : a,b \in A \}.
\]
By Lemma \ref{l3}, $|L| = |A|^2$ and there are at least $|A|^3$ incidences between $S$ and $L$.
Let $T = \{ [ - b , -b \cdot a ] : a,b \in A \}$ so $T$ is a subset of $\mathcal{L}$.  By Lemma \ref{beml}, 
\[
e(S,T) \leq \frac{ |S| |T| }{q} + q^{1/2}\sqrt{|S||T|}.
\]
We have $|L| = |T| = |A|^2$.  If $m = |A+A|$ and $n = |A \cdot A|$, then  
\[
e(S,T) \leq \frac{ mn |A|^2 }{q} + q^{1/2}|A|\sqrt{mn}.
\]
Next we find a lower bound on $e(S,T)$.  By construction, an incidence between $S$ and $L$ corresponds to an edge between $S$ and $T$ in $ \mathcal{G}( \Pi )$.  To see this, note that $(x,y) \in S$ is incident to $s(a,b) \in L$ if and only if $y = b \cdot x - b \cdot a$.
This is equivalent to the equation $- b \cdot x + y = - b \cdot a$ which holds if and only if $(x,y)$ is adjacent to $[-b , -b \cdot a]$ 
in $ \mathcal{G}( \Pi)$.  Thus,
\begin{equation}\label{eq:eq:1}
|A|^3 \leq e(S,T) \leq  \frac{ mn |A|^2 }{q} + q^{1/2}|A|\sqrt{mn}.
\end{equation}

To prove Corollary \ref{main cor}, observe that from (\ref{eq:eq:1}), we have 
\[
| A + A | |A \cdot A | \geq \textup{min} \left\{ c q |A| , \frac{ c |A|^4 }{q} \right\}
\]
where $c$ is any real number with $c + c^{1/2} < 1$.  If $x = \max \{ |A+A| , |A \cdot A | \}$, then 
$ x \geq \textup{min} \{ ( cq |A| )^{1/2} , \frac{ c^{1/2} |A|^2 }{q^{1/2} } \}$ and Corollary \ref{main cor} follows from this inequality.   
\end{proof}

\bigskip

\begin{proof}[Proof of Corollary \ref{v2:cor 5}]
Let $A \subset Q$, $P = A \times ( A \cdot (A + A ) )$, 
\[
l( b,c) = \{ (x,y) \in Q^2 : y = b \cdot ( x+c) \},
\]
and $L = \{ l(b,c) : b,c \in A \}$.  Then $|P| = |A| | A \cdot (A + A) |$, $|L| = |A|^2$, and 
$L$ is a set of lines in $Q^2$.  Let $z = |A \cdot (A + A)|$.  Observe that each $l(b,c) \in L$ contains at least $|A|$ points from $P$.  By Theorem 
\ref{main th2}, 
\[
|A|^3 \leq \frac{ |P| |L| }{q} + q^{1/2} \sqrt{ |P| |L| } = 
\frac{ |A|^3 z}{q} + q^{1/2} |A|^{3/2} z^{1/2}.
\]
This implies that $q |A|^{3/2} \leq |A|^{3/2} z  + q^{3/2} \sqrt{z}$. 
Therefore, we obtain
\[\sqrt{z}\ge \frac{-q^{3/2}+\sqrt{q^3+4|A|^{3}q}}{2|A|^{3/2}}=\frac{4|A|^3q}{2|A|^{3/2}(q^{3/2}+\sqrt{q^3+4|A|^3q})},\]
which implies that
\[
| A \cdot (A + A) | \geq c \min \left\{ q , \frac{ |A|^3 }{q} \right\}.
\]
We note that if $|A| \gg q^{2/3}$ then we can take $c = 1+o(1)$.
\end{proof}

\bigskip

\begin{proof}[Proof of Theorem \ref{main th3} and Corollary \ref{main cor3}]
Let $A , B , C, D \subset Q$.  Consider the sets $P = \{ ( d , -a ) : d \in D, a \in A \}$
and $L = \{ [ c , b + \gamma ] : c \in C , b \in B \}$.  An edge between $P$ and $L$ in $ \mathcal{G}( \Pi )$ corresponds to a solution to 
$c \cdot d + (-a) = b + \gamma$ with $c \in C$, $d \in D$, $a \in A$, and $b \in B$.  Therefore, 
$ e( P , L)$ is precisely the number of solutions to $a + b + \gamma = c \cdot d$ with
$(a,b,c,d) \in A \times B \times C \times D$.  
Observe that 
$|P | = |D| |A|$ and $|L| = |C| |B|$.  By Lemma 
\ref{beml},
\[
\left| N_{ \gamma} (A,B,C,D)   - \frac{ ( q + 1) |A| |B| |C| |D| }{q^2 + q + 1} \right| \leq q^{1/2} \sqrt{ |A| |B| |C| |D| }.
\]

To obtain Corollary \ref{main cor3}, apply Theorem \ref{main th3} with $A$, $B$, $C$, and $-D$.  For any 
$- \gamma \in Q$, the number of $(a,b,c,-d) \in A \times B \times C \times (-D)$ with 
$a+b - \gamma = c \cdot (-d)$ is at least 
\begin{equation}\label{v2:eq1}
\frac{ (q+1) |A| |B| |C| |-D| }{q^2 + q + 1} - q^{1/2} \sqrt{ |A| |B| |C| |-D| }.
\end{equation}
When $|A| |B| |C| |D| > q^3$, (\ref{v2:eq1}) is positive and so we have a solution 
to $a+b - \gamma = c \cdot (-d)$.  Since this equation is equivalent to 
$a + b + c \cdot d = \gamma$ and $\gamma$ was arbitrary, we get
\[
Q  = A + B + C \cdot D.
\]
\end{proof}


\section{Proof of Theorem \ref{v2:theorem 4} and \ref{v2:theorem 1}}

Let $\gamma \in Q$ and $d \geq 1$ be an integer.  
In order to prove Theorems \ref{v2:theorem 1} and \ref{v2:theorem 4}, we will need to consider a graph that is different from 
$ \mathcal{G} ( \Pi)$.  
Define the product graph $\mathcal{S P}_Q ( \gamma )$ to be the bipartite graph 
with parts $X$ and $Y$ where $X$ and $Y$ are disjoint copies of $Q^{d+1}$.  The 
vertex $(x_1 , \dots , x_{d+1})_X \in X$ is adjacent to the vertex 
$(y_1 , \dots , y_{d+1})_Y \in Y$ if and only if 
\begin{equation}\label{v2:sp equation}
x_1 + y_1 + \gamma = x_2 \cdot y_2 + \dots + x_{d+1} \cdot y_{d+1}.
\end{equation}

\begin{lemma}\label{v2:lemma 1 for 2.5}
For any $\gamma \in Q$ and integer $d \geq 1$, the graph $\mathcal{S P}_Q ( \gamma )$ is $q^d$-regular.  
\end{lemma}
\begin{proof}
Let $(x_1 , \dots , x_{d+1})_X$ be a vertex in $X$.  Choose $y_2 , \dots, y_{d+1} \in Q$ arbitrarily.  Equation 
(\ref{v2:sp equation}) has a unique solution for $y_1$ and so the degree of 
$(x_1 , \dots , x_{d+1})_X$ is $q^d$.  A similar argument applies to the vertices in $Y$.    
\end{proof}

\begin{lemma}\label{v2:lemma 2 for 2.5}
Let $\gamma \in Q$ and $d \geq 1$ be an integer.  If $\lambda_1 \geq \lambda_2 \geq \dots \geq \lambda_n$ are the eigenvalues of 
$\mathcal{S P}_Q ( \gamma)$, then $\lambda \leq q^{d/2}(1 + q^{-2})^{1/2}$ where $\lambda = \max_{i \neq 1,n} | \lambda_i |$.
\end{lemma}
\begin{proof}
Let $M$ be the adjacency matrix for $\mathcal{SP}_Q ( \gamma)$ where the first $q^{d+1}$ rows/columns are indexed by the elements of $X$.  We can write 
\[
M = \begin{pmatrix} 0 & N \\ N^T & 0 \end{pmatrix}
\]
where $N$ is the $q^{d+1} \times q^{d+1}$ matrix whose $(x_1 , \dots , x_{d+1})_X \times (y_1 , \dots , y_{d+1})_Y$ entry is 1 if 
\[
x_1 + y_1 + \gamma = x_2 \cdot y_2  + \dots + x_{d+1} \cdot y_{d+1}
\]
and is 0 otherwise.  

Let $x = (x_1 , \dots , x_{d+1} )_X$ and $x' = (x_1 ' , \dots , x_{d+1} ' )_X$ be distinct vertices in $X$.  The number of common 
neighbors of $x$ and $x'$ is the number of vertices $(y_1 , \dots , y_{d+1})_Y$ such that 
\begin{equation}\label{v2:eq1 for lemma 2}
x_1 + y_1 + \gamma = x_2 \cdot y_2 + \dots + x_{d+1} \cdot y_{d+1}
\end{equation}
and 
\begin{equation}\label{v2:eq2 for lemma 2}
x_1 ' + y_1  + \gamma = x_2 ' \cdot y_2 + \dots + x_{d+1} ' \cdot y_{d+1}.
\end{equation}
Subtracting (\ref{v2:eq2 for lemma 2}) from (\ref{v2:eq1 for lemma 2}) gives 
\begin{equation}\label{v2:eq3 for lemma 2}
x_1 - x_1 ' = x_2 \cdot y_2 + \dots + x_{d+1} \cdot y_{d+1 } - x_2 ' \cdot y_2 - \dots - x_{d+1} ' \cdot y_{d+1}.
\end{equation}
If $x_i = x_i ' $ for $2 \leq i \leq d + 1$, then the right hand side of (\ref{v2:eq3 for lemma 2}) is 0 so that 
$x_1 = x_1 '$.  This contradicts our assumption that $x$ and $x'$ are distinct vertices.
Thus, there is an $i \in \{2,3, \dots , d+1 \}$ for which $x_i \neq x_{i}'$.  There are 
$q^{d-2}$ choices for $y_2 , \dots , y_{i-1} , y_{i+1} , \dots y_{d+1}$.  Once these $y_j$'s have been chosen,
(\ref{v2:eq3 for lemma 2}) uniquely determines $y_i$ 
since $x_i  - x_{i}' \neq 0$.  Equation (\ref{v2:eq1 for lemma 2}) then uniquely determines $y_1$.  
Therefore, $x$ and $x'$ have exactly $q^{d-2}$ common neighbors when $x \neq x'$.    
A similar argument applies to the vertices in $Y$ so that any two distinct vertices $y$ and $y'$ in $Y$ have 
$q^{d-2}$ common neighbors.   

Let $J$ be the $q^{d+1} \times q^{d+1}$ matrix of all 1's and $I$ be the $2q^{d+1} \times 2q^{d+1}$ identity matrix.
Let $\mathcal{B}_E$ be the graph whose vertex set is $X \cup Y$ and two vertices $v$ and $y$ in 
$\mathcal{B}_E$ are adjacent if and only if they are both in $X$ or both in $Y$, and they have no common 
neighbor in the graph $\mathcal{SP}_Q ( \gamma )$.  The graph 
$\mathcal{B}_E$ is $(q-1)$-regular since given any 
$(d+1)$-tuple $(z_1 , \dots , z_{d+1}) \in Q^{d+1}$, there are exactly $q - 1$~$(d+1)$-tuples 
$(z_1 ' , \dots , z_{d+1}' ) \in Q^{d+1}$ for which 
$z_1 \neq z_1 '$ and $z_i = z_i '$ for $2 \leq i \leq d + 1$.  
It follows that 
\begin{equation}\label{v2:mat eq}
M^2 = q^{d - 2} \begin{pmatrix} J & 0  \\ 0 & J \end{pmatrix}  + (q^d - q^{d-2} ) I - q^{d-2} E
\end{equation}
where $E$ is the adjacency matrix of $\mathcal{B}_E$.  

By Lemma \ref{v2:lemma 1 for 2.5},
the graph $\mathcal{SP}_Q ( \gamma)$ is a $q^d$-regular bipartite graph so $\lambda_1 = q^d$, $\lambda_n = - q^d$, and the corresponding eigenvectors are $q^{d/2} ( \chi_X + \chi_Y)$ and 
$q^{d/2} ( \chi_X - \chi_Y)$, respectively.  
Here $\chi_Z$ denotes the characteristic vector for the set of vertices $Z$.  
Let $\lambda_j$ be an eigenvalue of $\mathcal{SP}_Q ( \gamma )$ with $j \neq 1$ and $j \neq n$.
Assume that $v_j$ is an eigenvector for $\lambda_j$.  Since $v_j$ is orthogonal to both $\chi_X + \chi_Y$ and $\chi_X - \chi_Y$, we have 
\[
\begin{pmatrix} J & 0 \\ 0 & J \end{pmatrix} v_j = 0.
\]
By (\ref{v2:mat eq}), $M^2 v_j = (q^d - q^{d-2} ) v_j  - q^{d-2} E v_j$ which can be rewritten as 
\[
E v_j = \left( q^2 - 1 - \frac{ \lambda_j^2 }{ q^{d-2} } \right) v_j.
\]
Thus, $q^2 - 1 - \frac{ \lambda_j^2 }{ q^{d-2} }$ is an eigenvalue of $E$.  Recall that $\mathcal{B}_E$ is a $(q - 1)$-regular graph so 
\[
\left| q^2 - 1 - \frac{\lambda_j^2 }{ q^{d-2} } \right| \leq q- 1.
\]
This inequality implies that $| \lambda_j | \leq  q^{d/2}( 1 + q^{-2})^{1/2} \leq 2q^{d/2}$. 
\end{proof}

\bigskip

\begin{proof}[Proof of Theorem \ref{v2:theorem 4}]
Let $A , B , C \subset Q$ where $Q$ is a finite quasifield with $q$ elements.  Given $\gamma \in Q$, let 
\[
Z_{ \gamma } = \{ ( a , b ,c ) \in A \times B \times C : a + b \cdot c = \gamma \}.
\]
We have $\sum_{ \gamma } |Z_{ \gamma} | = |A| |B| |C|$ so by the Cauchy-Schwarz inequality,
\begin{equation}\label{v2:eq 1 for th4}
|A|^2 |B|^2 |C|^2 = \left( \sum_{ \gamma } | Z_{ \gamma} | \right)^2 \leq 
|A + B \cdot C | \sum_{ \gamma  \in Q} |Z_{ \gamma} |^2.
\end{equation}
Let $x = \sum_{ \gamma} |Z_{ \gamma }|^2$.  By (\ref{v2:eq 1 for th4}), 
\begin{equation}\label{v2:eq 1 for th5}
| A + B \cdot C| \geq \frac{ |A|^2 |B|^2 |C|^2 }{x}.
\end{equation}
The integer $x$ is the number of ordered triples $(a,b,c)$, $(a' , b' , c')$ in $A \times B \times C$ such that 
$a+ b \cdot c = a' + b' \cdot c'$.  This equation can be rewritten as 
\[
a - a' = - b \cdot c + b' \cdot c'  = b \cdot (-c) + b' \cdot c'.
\]
Thus, $x$ is the number of edges between the sets 
\[
S = \{ (a,b,b')_X : a \in A , b ,b' \in B \}
\]
and 
\[
T = \{ (-a' , -c , c' )_Y : a' \in A , c , c ' \in C \}
\]
in the graph $\mathcal{SP}_Q ( 0)$.  By Lemma \ref{beml},
\[
x = e(S,T) \leq \frac{ |S| |T| }{q} + q^{1/2} \sqrt{ |S| |T| }.
\]
This inequality together with (\ref{v2:eq 1 for th5}) gives 
\[
\frac{ |A|^2 |B|^2 |C|^2}{ |A + B \cdot C| } =x \leq \frac{ |A|^2 |B|^2 |C|^2 }{q}  + q |A| |B| |C|
\]
from which we deduce that 
\[
|A + B \cdot C | \geq q - \frac{q^3}{|A||B||C| + q^2}
\]
\end{proof}

\noindent We note that as a corollary, if $|A||B||C| > q^3-q^2$ then $A + B\cdot C = Q$.

\bigskip

\begin{proof}[Proof of Theorem \ref{v2:theorem 1}]
Let $A \subset Q$, $S = - A \times A^d$, $T = - A \times A^d$, and view $S$ as a subset of $X$ and $T$ as a subset of $Y$ in the graph $\mathcal{SP}_Q ( \gamma )$.  By Lemmas \ref{beml} and \ref{v2:lemma 2 for 2.5}, 
\[
\left| e(S , T) - \frac{q^d |S| |T| }{q^{d+1} } \right| \leq 2 q^{d/2} \sqrt{ |S| |T| }.
\]
An edge between $S$ and $T$ corresponds to a solution to 
\[
-a_1 - a_1 ' + \gamma = a_2 \cdot a_2 ' + \dots + a_{d+1} \cdot a_{d+1} '
\]
with $a_i , a_i ' \in A$.  If $|A| \geq 2 q^{ \frac{d+2}{2d+2} }$, then $e(S,T) > 0$.  Since $\gamma$ is an arbitrary element of $Q$, we get 
\[
Q  = A +A + \underbrace{A \cdot A + \dots + A \cdot A}_{d~\textup{terms}}
\] 
which completes the proof of Theorem \ref{v2:theorem 1}.
\end{proof}


\section{Proof of Theorems \ref{v2:theorem 2} and \ref{v2:theorem 3}}

Let $Q$ be a finite quasifield with $q$ elements and let $K$ be the kernel of $Q$.  The \emph{product graph}, denoted 
$\mathcal{DP}_Q$, is the bipartite graph with parts $X$ and $Y$ where $X$ and $Y$ are disjoint copies 
of $Q^3$.  The vertex $(x_1, x_2, x_3)_X \in X$ is adjacent to $(y_1, y_2 , y_3)_Y \in Y$ if and only if 
\begin{equation}\label{v2:eq1 for theorem}
x_3 = x_1 \cdot y_1 + x_2 \cdot y_2 + y_3.
\end{equation}

\begin{lemma}\label{v2:l1 for theorem}
The graph $\mathcal{DP}_Q$ is $q^2$-regular.  
\end{lemma}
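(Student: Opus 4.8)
The plan is to verify $q^d$-regularity directly, by showing that every vertex on each side of the bipartition has exactly $q^d$ neighbours; the point is that in the defining relation~\eqref{v2:eq1 for theorem} the last coordinate of a candidate neighbour is forced once the first $d$ coordinates have been chosen arbitrarily.

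First I would fix a vertex $(x_1, \dots , x_{d+1})_X \in X$ and count its neighbours in $Y$. For any choice of $y_1, \dots , y_d \in Q$ — there are $q^d$ of them — the element $s := x_1 \cdot y_1 + \dots + x_d \cdot y_d$ is a well-defined element of $Q$, using only that $\cdot$ is a binary operation on all of $Q$ (with $0 \cdot x = 0$) and that $(Q,+)$ is a group. The adjacency condition~\eqref{v2:eq1 for theorem} then reads $x_{d+1} + \gamma = s + y_{d+1}$, and since $(Q,+)$ is an abelian group this has the unique solution $y_{d+1} = -s + x_{d+1} + \gamma$. Thus each of the $q^d$ choices of $(y_1, \dots , y_d)$ extends to exactly one neighbour, so $(x_1, \dots , x_{d+1})_X$ has degree exactly $q^d$.

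Then I would run the symmetric argument on the other side: fix $(y_1, \dots , y_{d+1})_Y \in Y$, choose $x_1, \dots , x_d \in Q$ arbitrarily ($q^d$ ways), set $t := x_1 \cdot y_1 + \dots + x_d \cdot y_d + y_{d+1}$, and observe that~\eqref{v2:eq1 for theorem} becomes $x_{d+1} + \gamma = t$, which again has the unique solution $x_{d+1} = t - \gamma$ in the group $(Q,+)$. Hence every vertex of $Y$ also has degree exactly $q^d$, and $\mathcal{DP}_Q(d,\gamma)$ is $q^d$-regular.

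I do not expect a genuine obstacle here; the only point requiring care is the quasifield bookkeeping — that the products $x_i \cdot y_i$ are defined for all elements (including $0$), so that no case analysis on vanishing coordinates is needed, and that it is the \emph{additive} group structure (not the multiplicative loop structure) that supplies the unique solvability for the final coordinate. Both sides are handled in the same way precisely because $x_{d+1}$ and $y_{d+1}$ each occur exactly once in~\eqref{v2:eq1 for theorem} and only through $+$.
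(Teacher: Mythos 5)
Your proof is correct and takes essentially the same approach as the paper: fix a vertex, choose the first $d$ coordinates of a neighbour freely, and observe that the adjacency relation forces the last coordinate uniquely via the additive group structure. The paper's proof is just a terser version of the same argument, so there is nothing to add.
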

\begin{proof}
Fix a vertex $(x_1, x_2 , x_3)_X \in X$.  We can choose $y_1$ and $y_2$ arbitrarily and then (\ref{v2:eq1 for theorem}) gives a unique solution for $y_3$.  Therefore, $(x_1, x_2 , x_3)_X$ has degree $q^2$.  A similar argument 
shows that every vertex in $Y$ has degree $q^2$.  
\end{proof}

\begin{lemma}\label{v2:l2 for theorem}
If $\lambda_1 \geq \lambda_2 \geq \dots \geq \lambda_n$ are the eigenvalues of 
$\mathcal{DP}_Q$, then $| \lambda | \leq q$ where $\lambda = \max_{i \neq 1,n} | \lambda_i |$.
\end{lemma}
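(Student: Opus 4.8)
The plan is to reduce the eigenvalue bound to an explicit diagonalization of a ``common neighbour'' matrix. Let $M$ denote the $q^{d+1}\times q^{d+1}$ biadjacency matrix of $\mathcal{DP}_Q(d,\gamma)$, with rows indexed by $X$ and columns by $Y$. Since the graph is bipartite with parts of equal size, its adjacency matrix has the block form with $M$ and $M^{T}$ off the diagonal and zeros on the diagonal, so its spectrum is precisely $\{\pm\sigma : \sigma \text{ a singular value of } M\}$. Hence it suffices to understand the positive semidefinite matrix $N := MM^{T}$, whose $(x,x')$ entry counts the common neighbours of $x,x'\in X$: if $N$ has top eigenvalue $q^{2d}$ with multiplicity one and every other eigenvalue at most $q^{d}$, then the singular values of $M$ are $q^{d}$ (with multiplicity one) and at most $q^{d/2}$, so $\mathcal{DP}_Q(d,\gamma)$ has $\pm q^{d}$ as eigenvalues with multiplicity one and all remaining eigenvalues of absolute value at most $q^{d/2}$, which is the claim.

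First I would compute those common-neighbour counts. The diagonal entries of $N$ are the degree $q^{d}$ (Lemma \ref{v2:l1 for theorem}). For $x\neq x'$, a common neighbour $y=(y_1,\dots,y_{d+1})$ satisfies both copies of \eqref{v2:eq1 for theorem}; subtracting them cancels $y_{d+1}$ and yields
\[
(x_1\cdot y_1+\dots+x_d\cdot y_d)-(x_1'\cdot y_1+\dots+x_d'\cdot y_d)=x_{d+1}-x_{d+1}'.
\]
If $x_i=x_i'$ for all $i\le d$, the left-hand side is $0$ while the right-hand side is nonzero (as $x\neq x'$), so there is no common neighbour. Otherwise pick $j\le d$ with $x_j\neq x_j'$; letting the other $y_i$ ($i\le d$, $i\neq j$) range freely over their $q^{d-1}$ values, the equation rewrites as $x_j\cdot y_j=x_j'\cdot y_j+c$ for a constant $c$, which has a unique solution in $y_j$ by axiom (3) in the definition of a quasifield, after which $y_{d+1}$ is forced by either copy of \eqref{v2:eq1 for theorem} (one checks the second copy then holds automatically). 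Hence $x$ and $x'$ have exactly $q^{d-1}$ common neighbours whenever they differ in some coordinate among the first $d$.

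Next I would diagonalize $N$. Grouping $X$ into $q^{d}$ blocks according to the first $d$ coordinates, the previous paragraph gives $N=q^{d}I+q^{d-1}(J-B)$, where $J$ is the $q^{d+1}\times q^{d+1}$ all-ones matrix and $B=I_{q^{d}}\otimes J_{q}$ is block-diagonal with $q\times q$ all-ones blocks. Since $J$ and $B$ commute, $N$ is diagonalized by a common eigenbasis: the all-ones vector gives eigenvalue $q^{d}+q^{d-1}(q^{d+1}-q)=q^{2d}$, with multiplicity one; the block-constant vectors of total sum zero give $q^{d}+q^{d-1}(0-q)=0$, with multiplicity $q^{d}-1$; and the vectors of sum zero on each block give $q^{d}$, with multiplicity $q^{d+1}-q^{d}$. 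Thus the singular values of $M$ are $q^{d}$ (multiplicity one), $q^{d/2}$, and $0$, so the spectrum of $\mathcal{DP}_Q(d,\gamma)$ is $\{q^{d},-q^{d}\}$ (each with multiplicity one) together with values lying in $\{q^{d/2},0,-q^{d/2}\}$. In particular $\lambda_1=q^{d}$, $\lambda_n=-q^{d}$, and $\max_{i\neq 1,n}|\lambda_i|\le q^{d/2}$.

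The main obstacle will be the common-neighbour count, not the linear algebra. Because a left quasifield need not satisfy the right distributive law, one cannot factor $x_i\cdot y_i-x_i'\cdot y_i$ as $(x_i-x_i')\cdot y_i$; the subtraction above must be arranged so that only the left multiplications $x_i\cdot y_i$ appear, and then the genuinely ``quasifield'' axiom (3) — unique solvability of $a\cdot x=b\cdot x+c$ when $a\neq b$ — is precisely what supplies the unique $y_j$, hence the clean count $q^{d-1}$. Everything after that is routine bookkeeping with Kronecker products.
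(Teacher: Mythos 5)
Your proof is correct. The combinatorial core --- counting common neighbours of $x,x'\in X$ by subtracting the two instances of \eqref{v2:eq1 for theorem} and invoking the quasifield axiom that $a\cdot z=b\cdot z+c$ has a unique solution when $a\neq b$ --- is the same computation the paper performs, but the linear algebra wrapper is genuinely different. The paper works with the full $2q^{d+1}\times 2q^{d+1}$ adjacency matrix, counts walks of length three to prove $M^3=q^dM+q^{d-1}(q^d-1)P$ with $P$ the bipartite all-ones pattern, and then observes that eigenvectors orthogonal to $\chi_X\pm\chi_Y$ are annihilated by $P$, so $\lambda_j^3=q^d\lambda_j$ and $|\lambda_j|\le q^{d/2}$. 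You instead diagonalize the Gram matrix $MM^{T}$ of the biadjacency matrix explicitly as $q^dI+q^{d-1}(J-B)$ with $B$ block diagonal; the two identities are equivalent (right-multiplying yours by the biadjacency matrix, and using that each block of $B$ contains exactly one neighbour of any fixed $y\in Y$, recovers the paper's cubic relation). Your route yields slightly more --- the full spectrum with multiplicities, namely $\pm q^{d}$ once each, $\pm q^{d/2}$ with multiplicity $q^{d+1}-q^{d}$ each, and $0$ with multiplicity $2(q^{d}-1)$ --- at the cost of exhibiting the common eigenbasis of $J$ and $B$, whereas the paper's polynomial identity reaches the bound with less spectral bookkeeping. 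Your explicit remark that $x_j\cdot y_j-x_j'\cdot y_j$ cannot be factored in a left quasifield, so that axiom (3) must be applied directly to solve $x_j\cdot y_j=x_j'\cdot y_j+c$, is exactly the right point to emphasize; the paper handles it only implicitly when it asserts a unique solution for $y_1'$.
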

\begin{proof}
Let $M$ be the adjacency matrix 
of $\mathcal{DP}_Q$.  Assume that the first $q^3$ rows/columns of $M$ correspond to the vertices of $X$.  We can write 
\[
M = \begin{pmatrix} 0 & N \\ N^T & 0 \end{pmatrix}
\]
where $N$ is the $q^3 \times q^3$ matrix whose $(x_1 , x_2 , x_3)_X \times (y_1 , y_2 , y_3)_Y$-entry is 
1 if (\ref{v2:eq1 for theorem}) holds and is 0 otherwise.  Let $J$ be the $q^3 \times q^3$ matrix of all 1's and 
let 
\[
P = \begin{pmatrix} 0 & J \\ J & 0 \end{pmatrix}.
\]
We claim that 
\begin{equation}\label{v2:l2 eq}
M^3 = q^2 M + q (q^2 - 1)P.
\end{equation}
The $(x,y)$-entry of $M^3$ is the number of walks of length 3 from 
$x = (x_1 , x_2 , x_3)_X$ to $y= (y_1,y_2,y_3)_Y$.  Suppose that $x y' x' y$ is such a walk where 
$y' = (y_1 ' , y_2 ' , y_3' )_Y$ and $x' = (x_1 ' , x_2 ' , x_3 ' )_X$.  By Lemma \ref{v2:l1 for theorem}, there are $q^2$
vertices $x' \in X$ such that $x'$ is adjacent to $y$.  In order for $x y' x' y$ to be a walk of length 3, $y'$ must be adjacent to 
both $x$ and $x'$ so we need  
\begin{equation}\label{v2:l2 eq2}
x_3 = x_1 \cdot y_1 ' + x_2 \cdot y_2 ' + y_3 '
\end{equation}
and 
\begin{equation}\label{v2:l2 eq3}
x_3 ' = x_1 ' \cdot y_1 ' + x_2 ' \cdot y_2 ' + y_3 '.
\end{equation}
We want to count the number of $y'$ that satisfy both (\ref{v2:l2 eq2}) and (\ref{v2:l2 eq3}).  We consider two cases.

\medskip
\noindent
\textit{Case 1}: $x$ is not adjacent to $y$.  

If $x_1 = x_1 ' $ and $x_2 = x_2'$, then (\ref{v2:l2 eq2}) and 
(\ref{v2:l2 eq3}) imply that $x_3 = x_3 '$.  This implies $x = x'$ and so $x$ is adjacent to $y$ but this contradicts our assumption that $x$ is not adjacent to $y$.  Therefore, $x_1 \neq x_1 '$ or $x_2 \neq x_2 '$.  Without loss of generality, assume that $x_1 \neq x_1 '$.  Subtracting (\ref{v2:l2 eq3}) from (\ref{v2:l2 eq2}) gives 
\begin{equation}\label{v2:l2 eq4}
x_3 - x_3 ' + x_1 ' \cdot y_1 ' + x_2 ' \cdot y_2' = x_1 \cdot y_1 ' + x_2 \cdot y_2'.
\end{equation}
Choose $y_2 ' \in Q$.  Since $Q$ is a quasifield and $x_1 - x_1' \neq 0$, there is a unique solution for $y_1 '$ in 
(\ref{v2:l2 eq4}).  Equation (\ref{v2:l2 eq2}) then gives a unique solution for $y_3'$ and so there are 
$q$ choices for $y' = (y_1 ' , y_2 ' , y_3 ' )_Y$ for which both (\ref{v2:l2 eq2}) and (\ref{v2:l2 eq3}) hold.  In this case, the number of walks of length 3 from $x$ to $y$ is $(q^2 - 1) q$ since $x'$ may be chosen in $q^2 - 1$ ways as we require 
$(x_1 '  , x_2 ' ) \neq (x_1 , x_2)$. 

\medskip
\noindent
\textit{Case 2}: $x$ is adjacent to $y$.  

The same counting as in Case 1 shows that there are 
$(q^2 - 1)q$ paths $xy' x' y$ with $x \neq x'$.  By Lemma \ref{v2:l1 for theorem}, there are $q^2$ paths of the form $xy' x y$ since the degree of $x$ is $q^2$.  

\medskip

From the two cases, we deduce that 
\[
M^3 = q^2 M + q (q^2 - 1)P.
\]
Let $\lambda_j$ be an eigenvalue of $M$ with $j \neq 1$ and $j \neq n$.  Let $v_j$ be an eigenvector for $\lambda_j$.  Since $v_j$ is orthogonal to $\chi_X + \chi_Y$ and $\chi_X - \chi_Y$, we have $Pv_j = 0$ and so 
\[
M^3 v_j = q^2 M v_j. 
\]
This gives $\lambda_j^3 = q^2 \lambda_j$ so $| \lambda_j | \leq q$.  
\end{proof}
     
\bigskip

\begin{proof}[Proof of Theorem \ref{v2:theorem 2}]
Let $\gamma \in Q^*$ and $A,B,C,D \subset Q$.  For each pair $(b,d ) \in B \times D$, define  
\[
L_{ \gamma} ( b , d) = \{ ( b \cdot \lambda , d \cdot \lambda , - \gamma \cdot \lambda )_Y : \lambda \in K^* \}.
\]

\medskip
\noindent
\textit{Claim 1}: If $(a,c) \in A \times C$ and $a \cdot b + c \cdot d = \gamma$, then $(a,c,0)_X$ is adjacent to every vertex in 
$L_{ \gamma } ( b, d)$.

\smallskip
\noindent
\textit{Proof.} Assume $(a,c)\in A \times C$ satisfies $a \cdot b  + c \cdot d = \gamma$.  If
$\lambda \in K^*$, then 
\[
a \cdot ( b \cdot \lambda ) + c \cdot ( d \cdot \lambda ) = ( a \cdot b) \cdot \lambda + ( c \cdot d ) \cdot \lambda 
= (a \cdot b + c \cdot d ) \cdot \lambda = \gamma \cdot \lambda.
\]
Therefore, $0 = a \cdot ( b \cdot \lambda) + c \cdot ( d \cdot \lambda) - \gamma \cdot \lambda$ which shows that 
$(a,c,0)_X$ is adjacent to $( b \cdot \lambda , d \cdot \lambda , - \gamma \cdot \lambda )_Y$.

\bigskip
\noindent
\textit{Claim 2}: If $(b_1 , d_1 ) \neq (b_2 , d_2)$, then $L_{ \gamma } ( b_1 , d_1 ) \cap L_{ \gamma } (b_2 , d_2 ) = \emptyset$.

\smallskip
\noindent
\textit{Proof.}  Suppose that $L_{ \gamma } (b_1 , d_1 ) \cap L_{ \gamma } (b_2 , d_2 ) \neq \emptyset$.  There are elements $\lambda , \beta \in K^*$ such that 
\[
( b_1 \cdot \lambda , d_1 \cdot \lambda , - \gamma \cdot \lambda )_Y 
= (b_2 \cdot \beta , d_2 \cdot \beta ,  - \gamma \cdot \beta )_Y.
\]
This implies 
\[
b_1 \cdot \lambda = b_2 \cdot \beta ,~~ d_1 \cdot \lambda  = d_2 \cdot \beta , ~~ \mbox{and}~~ \gamma \cdot \lambda = \gamma \cdot \beta.
\]
Since $\gamma \cdot \lambda = \gamma \cdot \beta$, we have $\gamma \cdot ( \lambda  - \beta )  = 0$.  As 
$\gamma \neq 0$, we must have $\lambda = \beta$ so $b_1 \cdot \lambda = b_2 \cdot \beta = b_2 \cdot \lambda$.
Using Lemma \ref{v2:l0}, 
\[
0 = b_1 \cdot \lambda - ( b_2 \cdot \lambda) = b_1 \cdot \lambda + ( - b_2 ) \cdot \lambda 
= (b_1 - b_2 ) \cdot \lambda.
\]
Since $\lambda \neq 0$, we have $b_1 = b_2$.  A similar argument shows that $d_1 = d_2$.

\bigskip

Let $S = \{ (a,c,0)_X : a \in A , c \in C \}$ and 
\[
T =  \bigcup_{ (b,d) \in B \times D} L_{ \gamma} ( b,d).
\]
The number of edges between $S$ and $T$ in $\mathcal{DP}_Q$ is $N_{\gamma } ( |K| -1)$ where $N_{ \gamma}$ is the number of 4-tuples $(a,b,c,d) \in A \times B \times C \times D$ such that 
$a \cdot b + c \cdot d = \gamma$.  Furthermore 
$|S| = |A| |C|$ and $|T| = |B| |D| ( |K| - 1)$ by Claim 2.  By Lemmas \ref{beml} and \ref{v2:l2 for theorem},
\begin{equation}\label{v2:eq100}
\left| N_{ \gamma } ( |K| - 1) - \frac{ |S| |T| }{q} \right| \leq q \sqrt{ |S| |T| }.
\end{equation}
This equation is equivalent to 
\[
\left| N_{ \gamma}  - \frac{ |A| |B| |C| |D| }{q} \right| \leq q \left( \frac{ |A| |B| |C| |D| }{ |K| - 1 } \right)^{1/2}
\]
which completes the proof of Theorem \ref{v2:theorem 2}.
\end{proof}

\bigskip

The proof of Theorem \ref{v2:theorem 3} is similar to the proof of Theorem \ref{v2:theorem 2}.  Instead of working with the graph 
$\mathcal{DP}_Q$, one works with the graph $\mathcal{DP}_{Q,d}$ which we define to be the bipartite graph 
with parts $X$ and $Y$ where these sets are disjoint copies of $Q^{d+1}$.  The vertex 
$(x_1 , \dots , x_{d+1})_X \in X$ is adjacent to $(y_1 , \dots , y_{d+1} )_Y \in Y$ if and only if 
\[
x_{d+1} = x_1 \cdot y_1 + \dots + x_d \cdot y_d + y_{d+1}.
\]
It is easy to show that $\mathcal{DP}_{q,d}$ is $q^d$-regular.  Equation (\ref{v2:l2 eq}) will become 
\[
M^3 = q^d M + q^{d-1}(q^d - 1) P
\]
which will lead to the bound of $\lambda \leq q^{d/2}$ where $\lambda = \max_{i \neq 1,n} | \lambda_i |$ and 
$\lambda_1 \geq \lambda_2 \geq \dots \geq \lambda_n$ are the eigenvalues of 
$\mathcal{DP}_{q,d}$.  
One then counts edges between the sets 
\[
S = \{ (a_1 ' , \dots , a_d ' , 0 )_X : a_i ' \in A \}
\]
and 
\[
T = \bigcup_{ (a_1 , \dots , a_d) \in A^d } L_{ \gamma } ( a_1 , \dots , a_d)
\]
where $L_{ \gamma }(a_1 , \dots, a_d) = \{ (a_1 \cdot \lambda , \dots , a_d \cdot \lambda , - \gamma \cdot \lambda )_Y : \lambda \in K^* \}$.  The remaining details are left to the reader.  


\end{document}